\newtheorem{theorem}{Theorem}[section]
\newtheorem{lemma}[theorem]{Lemma}
\newtheorem{proposition}[theorem]{Proposition}
\newtheorem{remark}[theorem]{Remark}
\theoremstyle{definition}
\newcommand{\fm}{f_{\leq 2^{j-k}}}
\newcommand{\fp}{f_{\geq 2^{j+k}}}
\newcommand{\fii}{f_{2^{j-k}\leq \cdot \leq 2^{j+k}}}
\newcommand{\gm}{g_{\leq 2^{j-k}}}
\newcommand{\gp}{g_{\geq 2^{j+k}}}
\newcommand{\gii}{g_{2^{j-k}\leq \cdot \leq 2^{j+k}}}
\def\Ca{C^{\alpha}}
\def\bR{\mathbb{R}}
\def\bC{\mathbb{C}}
\def\bN{\mathbb{N}}
\def\bZ{\mathbb{Z}}
\def\cF{\mathcal{F}}
\def\cP{\mathcal{P}}
\def\cA{\mathcal{A}}
\def\cB{\mathcal{B}}
\def\cH{\mathcal{H}}
\def\rd{\bR^d}
\def\rdd{\bR^{2d}}
\def\lc{\left(}
\def\rc{\right)}
\def\lV{\Big\Vert}
\def\rV{\Big\Vert}
\def\wt{\widetilde}
\def\*b{*_{\bullet}}
\def\Bd'{B_{\delta'}}
\def\cBd'{\bar{B}_{\delta'}}
\def\ird{\int_{\rd}}
\def\wh{\widehat}
\def\Lip{{\rm Lip}}
\def\badmax{\max\Big\{ \log \frac{\|\Delta \tau\|_{L^\infty}}{\|D\tau \|_{L^\infty}}, 1 \Big\} }
\newcommand{\adm}[1]{{\left\Vert #1 
		\right\Vert}}
\newcommand\scalemath[2]{\scalebox{#1}{\mbox{\ensuremath{\displaystyle #2}}}}
\numberwithin{equation}{section}
\newcommand{\pushright}[1]{\ifmeasuring@#1\else\omit\hfill$\displaystyle#1$\fi\ignorespaces}
\newcommand{\pushleft}[1]{\ifmeasuring@#1\else\omit$\displaystyle#1$\hfill\fi\ignorespaces}
\begin{document}
	
\title[On the stability of the scattering transform]{Stability of the scattering transform for deformations with minimal regularity}
\author{Fabio Nicola}
\address{Dipartimento di Scienze Matematiche, Politecnico di Torino, Corso Duca degli Abruzzi 24, 10129 Torino, Italy.}
\email{fabio.nicola@polito.it}
\author{S. Ivan Trapasso}
\address{MaLGa Center - Department of Mathematics (DIMA), Università di Genova. Via Dodecaneso 35, 16146 Genova, Italy.}
\email{salvatoreivan.trapasso@unige.it}
\subjclass[2020]{94A12, 42C40, 42C15, 42B35, 68T07, 68T05.}
\keywords{Scattering transform, stability, deformations, multiresolution approximation, convolutional neural networks.}

\begin{abstract} 
Within the mathematical analysis of deep convolutional neural networks, the wavelet scattering transform introduced by St\'ephane Mallat is a unique example of how the ideas of multiscale analysis can be combined with a cascade of modulus nonlinearities to build a nonexpansive, translation invariant signal representation with provable geometric stability properties, namely Lipschitz continuity to the action of small $C^2$ diffeomorphisms -- a remarkable result for both theoretical and practical purposes, inherently depending on the choice of the filters and their arrangement into a hierarchical architecture. In this note, we further investigate the intimate relationship between the scattering structure and the regularity of the
deformation in the H\"older regularity scale $C^\alpha$, $\alpha >0$. We are able to precisely identify the stability threshold, proving that stability is still achievable for deformations of class $C^{\alpha}$, $\alpha>1$, whereas instability phenomena can occur at lower regularity levels modelled by $C^\alpha$, $0\le \alpha <1$. While the behaviour at the threshold given by Lipschitz (or even $C^1$) regularity remains beyond reach, we are able to prove a stability bound in that case, up to $\varepsilon$ losses. 
\end{abstract}
\maketitle

\section{Introduction}

Broadly speaking, the last decade was certainly marked by a striking series of successes in several machine learning tasks relying on neural networks \cite{lecun}. In particular, impressive results in image classification, pattern recognition and feature extraction were achieved by means of deep convolutional neural networks. Borrowing from Wigner, the efforts of many researchers are currently directed to provide explanations for the ``unreasonable effectiveness'' of these models and related intriguing phenomena, such as the double descent error curve \cite{belkin,hastie,mei,nakkiran} or the instability to adversarial attacks \cite{alaifari,carliniwag,goodfe,kurakin,sze}. 

The mathematical analysis of convolutional neural networks is a wide area of current interest in the literature. The present note fits into a line of research pioneered by St\'ephane Mallat, ultimately aimed at showing how some fundamental principles of harmonic analysis can be used to obtain theoretical models and guarantees in connection with problems of deep learning. Motivated by some properties naturally expected to be satisfied by a proper feature extractor, in the fundamental contribution \cite{mall cpam} it is shown how such conditions essentially force the design of a multiscale signal representation to have a hierarchical architecture that shares many similarities with that of a convolutional neural network. 

Let us briefly retrace here the basic ideas behind the construction for the sake of clarity. Motivated by image analysis, the goal is to build up a \textit{feature map} $\Phi \colon L^2(\rd) \to \cH$, with values in a suitable Hilbert space $\cH$, such that: 
\begin{enumerate}
    \item \textit{$\Phi$ is a nonexpansive transform}. \\ This condition ensures stability to additive perturbations, that is 
    \[ \adm{\Phi(f) - \Phi(h)} \le \| f-h \|_{L^2}, \quad f,h \in L^2(\rd). \]
    
    \item \textit{$\Phi$ is a translation-invariant transform}. \\ Let $L_x$ be the translation operator by $x \in \rd$, acting on $f \in L^2(\rd)$ as $L_x f(y) = f(y-x)$. Then 
    \[ \Phi(L_x f) = \Phi(f), \quad  f\in L^2(\rd),\quad x \in \rd. \]
    
    \item \textit{$\Phi$ is stable to the action of small diffeomorphisms.} \\ A convenient linearization of the action of a diffeomorphism along the orbits of the translation group leads one to consider deformation operators of the form $L_\tau f (y) \coloneqq f(y-\tau(y))$ with distortion field $\tau \colon \rd \to \rd$. Stability is achieved if the feature vectors of $L_\tau f$ and $f$ are close when the underlying diffeomorphism $1-\tau$ is close to identity, namely if there exists $C>0$ such that
    \[ \adm{\Phi(L_\tau f) - \Phi(f)} \le C K(\tau) \|f \|_{L^2}, \quad f \in L^2(\rd)\] where $K(\tau)$ is some complexity measure/cost associated with the deformation $\tau$.
    
    
\end{enumerate}

\subsection{The wavelet scattering transform}
The approach in \cite{mall cpam} relies on the a priori exploitation of the principles of multiscale analysis in order to satisfy the requirements detailed above. It is indeed well understood that instability to deformations is mostly attributable to the vulnerability of the high-frequency components of a signal, which however carry fine-structure details and cannot thus be discarded without deteriorating the information captured by the representation $\Phi$. A Littlewood-Paley wavelet transform \cite{mallbook, meyer} can be used to perform scale separation and rearrange the frequency content of a signal into dyadic packets. Thanks to inherent redundancy and additional nonlinear operations, this procedure allows one to stabilize the high-frequency content up to a certain scale, as well as to obtain stability guarantees to relatively small translations. Recovery of the information content discarded by a fixed scale wavelet transform is achieved by iteration of the same procedure on the outputs of the latter, ultimately leading to a cascade of convolutions with fixed wavelet filters and modulus nonlinearities that eventually has the multilayer architecture of a convolutional neural network. The pooling stage is performed by extracting low-frequency averages of each scattered wavelet coefficient, and actually coincides with output feature generation. 

An essential yet more detailed discussion of this construction is provided in Section \ref{sec notation}, where we also fix the notation used below. Here we just recall that a low-pass filter $\phi$ and a mother wavelet $\psi$ on $\rd$ are primarily chosen in such a way that the collection $\{\phi_{2^J}\} \cup \{\psi_\lambda\}_{\lambda \in \Lambda_J}$ obtained by suitable rotations and dilations up to the scale $2^J$, $J \in \bZ$ (see \eqref{eq def lambdaj} for the precise definition of the index set $\Lambda_J$), allow one to essentially cover the frequency space without holes -- as entailed by the Littlewood-Paley condition \eqref{eq LP} below. The wavelet modulus coefficient corresponding to $\lambda \in \Lambda_J$ is given by $U[\lambda]f = |f*\psi_\lambda|$. The cascading sequence that we mentioned before is obtained by iteration along all the possible paths with finite length, namely $\cP_J = \bigcup_{m\ge 0} \Lambda_J^m$, so that given $p=(\lambda_1, \ldots, \lambda_m) \in \Lambda_J^m$ we set
\[ U[p]f \coloneqq U[\lambda_m] \cdots U[\lambda_1]f. \] The windowed wavelet scattering transform at scale $2^J$ is thus the collection (indexed by $\cP_J$) of features obtained by averaging with the low-pass filter $\phi_{2^J}$ at the scale $2^J$: \[ S_J[\cP_J] f \coloneqq \{S_J[p] f\}_{ p \in \cP_J}, \quad S_J[p]f \coloneqq U[p] f * \phi_{2^J}. \]
The feature space corresponds to $\cH = \ell^2(\cP_J;L^2(\rd))$, hence
\[ \| S_J[\cP_J] f\|^2  = \sum_{p \in \cP_J} \| S_J[p]f \|_{L^2}^2. \]


Concerning the stability to small deformations, it was proved in \cite[Theorem 2.12]{mall cpam} that, under suitable assumptions on the frequency filters (see Section \ref{sec a brief review} below for details), for  every input signal $f$ with finite mixed $\ell^1 L^2$ scattering norm, that is
\[ \|U[\mathcal{P}_J] f\|_1 \coloneqq \sum_{m \ge 0} \Big( \sum_{p \in \Lambda_J^m} \| U[p] f\|^2_{L^2} \Big)^{1/2} < \infty, \] and for every deformation $\tau\in C^2(\rd;\rd)$ with $\|D\tau\|_{L^\infty}\leq 1/2$, the following stability estimate holds:
\begin{equation}\label{eq mallat0}
        \| S_J[\cP_J](L_\tau f) - S_J[\cP_J](f) \|  \le  C K_2(\tau) \|U[\cP_J]f\|_1, 
\end{equation} with
\[ K_2(\tau) =  2^{-J}\|\tau\|_{L^\infty} + \badmax \|D\tau\|_{L^\infty} + \|D^2\tau\|_{L^\infty}, \] where $\| \Delta \tau \|_{L^\infty} \coloneqq \sup_{x,y \in \rd} |\tau(x) - \tau(y)|$ and $D^2\tau$ stands for the Hessian of $\tau$. 

Some remarks are in order here. First, this estimate implies stability under small $C^2$ deformations, as well as approximate invariance to global translations up to the scale $2^J$ (with global invariance recaptured in the asymptotic regime $J\to +\infty$). 

Concerning the occurrence of the scattering norm, 
it is proved in \cite[Lemma 2.8]{mall cpam} that a similar $\ell^2 L^2$ norm is finite for functions with a certain average modulus of continuity in $L^2$, in particular for functions with logarithmic-Sobolev regularity. It is also worthwhile to point out that numerical evidences of exponential decay of the scattering energy coefficients were rigorously confirmed (at least in dimension $d=1$) in \cite{Waldspurger}. The latter results also imply that $1$-dimensional signals with a (generalized) logarithmic-Sobolev regularity have indeed finite $\ell^1 L^2$ scattering norm (see Proposition \ref{pro embed} below). 

It should be highlighted that one can also restrict to more regular signal classes, such as Sobolev spaces or band-limited and cartoon functions. The underlying gain in signal regularity usually comes along with some degree of stability to small deformations -- namely, $L^2$ sensitivity bounds of the form $\|L_\tau f - f\|_{L^2} = O (K(\tau))$ are satisfied for suitably small and regular deformations, see e.g.\ \cite{wiat paper,wiat old}. In view of the  Lipschitz continuity of the feature extractor, the latter bounds reflect into stability results for the signal representation, in a sense ``inherited'' from the sensitivity to deformations of the underlying signal class  \cite{balan, bietti, czaja,zou}. 
On the other hand, the estimate \eqref{eq mallat0} entails the more difficult problem of deriving ``structural'' stability guarantees from the very design of the feature extractor, which are thus informative on the invariance of the signal representation rather than the regularity of the signal itself. 

Putting aside these complementary views on the issue, let us observe that
the condition $\|D \tau\|_{L^\infty}\leq 1/2$ suffices to ensure that $I-\tau$ is a bi-Lipschitz map and $L_\tau \colon L^2(\rd)\to L^2(\rd)$ is well-defined and uniformly bounded\footnote{Indeed, for every $y\in\rd$, the map $\rd\to\rd$ given by $x\mapsto y+\tau(x)$ is a contraction, with Lipschitz constant $L\leq 1/2$. The map that associates $y$ with the corresponding unique fixed point $x$ has Lipschitz constant $\leq 1/(1-L)\le 2$.}. More precisely, if $f$ is concentrated in a certain frequency dyadic band, $L_\tau f$ is essentially concentrated in the same band as well and this suggests that such deformations should interact well with the transform architecture, which is adjusted to such bands by design. Nevertheless, we will see that some instability phenomena may occur if $\|D\tau\|_{L^\infty}\not\to0$. 

\subsection{A regularity scale for deformations}


The purpose of this note is to elucidate the intimate relationship between the scattering architecture and the regularity of the deformation, lying at the very core of the ``structural'' stability for the wavelet scattering transform. To this aim, we consider distortion fields in the $C^\alpha$  regularity scale, $\alpha>0$ (H\"older classes, recalled in Section \ref{sec notation} below), hence encompassing the case $\alpha=2$ already studied in \cite{mall cpam}. The quest for the minimal deformation regularity needed to achieve stability guarantees is an intriguing and natural challenge from a mathematical point of view, further motivated by the current practice in several problems in PDEs and image analysis \cite{scherzer,trouve,younes} where diffeomorphisms with lower regularity are taken into account -- for instance, Sobolev deformations $\tau \in H^s(\rd;\rd)$ with $s>d/2+1$, hence in $C^{s-d/2}(\rd;\rd)$. Stability results for the scattering transform under such weaker regularity assumptions for the deformation would then broaden the theoretical and practical scope of this mathematical theory, hence promoting cross-fertilisation with classical and recent problems arising in signal analysis and deep learning. 

As a first result we highlight the following instability phenomenon, when $\tau\to0$ in $C^\alpha$, for $0\leq \alpha<1$, but not in the $C^1$ norm. 
We assume here $d=1$. 

\begin{theorem}\label{thm mainthm0}  
 Suppose that the filters $\phi,\psi\in L^1(\bR)\cap L^2(\bR)$ in the definition of the scattering transform satisfy the Littlewood-Paley condition \eqref{eq LP} below. Assume, in addition, that
  $\psi$ has Fourier transform $\widehat{\psi}$ compactly supported in $(0,+\infty)$.

There exist $\tau,f\in C^\infty(\bR;\bR)\setminus\{0\}$ with compact support and satisfying $\|\tau'\|_{L^\infty}\le 1/2$ such that the following holds true. 

There exists $C>0$ such that, for every $J\in\bZ$, $n\in\bN$, setting $f_n(x)=2^{n/2}f(2^n x)$ and $\tau_n(x)=2^{-n}\tau(2^n x)$,  
\begin{equation}\label{eq cont 0}
\|S_J[\cP_J](L_{\tau_n} f_n)-S_J[\cP_J](f_n)\|\geq C.
\end{equation}
As a consequence, for $0\leq\alpha<1$, there exists $C>0$ such that, for every $J\in\bZ$, $n\in\bN$,
\begin{equation}\label{eq cons0}
\|S_J[\cP_J](L_{\tau_n} f_n)-S_J[\cP_J](f_n)\|\geq  C 2^{n(1-\alpha)}\|\tau_n\|_{C^\alpha}\|f_n\|_{L^2}.
\end{equation}
\end{theorem}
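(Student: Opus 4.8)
\emph{Strategy and reduction by scaling.} The plan rests on two structural facts about the windowed scattering transform: exact scale-covariance, and preservation of the $L^2$-norm. Granting these, it suffices to make the deformation change the energy of the signal, which a genuine dilation always does. Write $D_ag(x)=a^{1/2}g(ax)$ for the unitary dilation; then $f_n=D_{2^n}f$, and since $\tau_n(x)=2^{-n}\tau(2^nx)$ one computes $L_{\tau_n}f_n(x)=2^{n/2}f\big(2^nx-\tau(2^nx)\big)=D_{2^n}(L_\tau f)(x)$. Because $\phi_{2^J}$ and the $\psi_\lambda$ are dilates of fixed filters, conjugating the cascade of convolutions and moduli by $D_{2^n}$ gives $S_J[\cP_J]\circ D_{2^n}=D_{2^n}\circ S_{J+n}[\cP_{J+n}]$ under the natural bijection $\cP_J\leftrightarrow\cP_{J+n}$, so that
\[
\big\|S_J[\cP_J](L_{\tau_n}f_n)-S_J[\cP_J](f_n)\big\|=\big\|S_{J+n}[\cP_{J+n}](L_\tau f)-S_{J+n}[\cP_{J+n}](f)\big\| .
\]
As $(J,n)$ runs over $\bZ\times\bN$, the index $J+n$ runs over all of $\bZ$, so \eqref{eq cont 0} is equivalent to exhibiting $\tau,f\in C_c^\infty(\bR)\setminus\{0\}$ with $\|\tau'\|_{L^\infty}\le1/2$ and $\inf_{J'\in\bZ}\|S_{J'}[\cP_{J'}](L_\tau f)-S_{J'}[\cP_{J'}](f)\|>0$. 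The passage from \eqref{eq cont 0} to \eqref{eq cons0} is then mere bookkeeping: $\|f_n\|_{L^2}=\|f\|_{L^2}$, whereas $\|\tau_n\|_{L^\infty}=2^{-n}\|\tau\|_{L^\infty}$ and $[\tau_n]_{C^\alpha}=2^{n(\alpha-1)}[\tau]_{C^\alpha}$ for $0<\alpha<1$, so $2^{n(1-\alpha)}\|\tau_n\|_{C^\alpha}$ is bounded uniformly in $n$ and the right-hand side of \eqref{eq cons0} is dominated by \eqref{eq cont 0} after shrinking the constant.

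\emph{Choice of the pair $(\tau,f)$.} I would take $\tau\in C_c^\infty(\bR)$ equal to $s\,x$ (for a fixed $s\in(0,1/2]$) on an interval large enough to contain $(I-\tau)^{-1}(\supp f)$ and tapering slowly to $0$ outside, so that $\|\tau'\|_{L^\infty}\le1/2$; this is possible once $\supp f$ is small. A convenient signal is $f(x)=\cos(\xi_0x)\eta(x)$ with $\eta\in C_c^\infty(\bR)$, $\eta\ge0$, and $\xi_0$ large, for which the hypothesis $\supp\widehat\psi\subset(0,+\infty)$ makes the first-layer moduli $|f*\psi_\lambda|$ transparent (they are demodulated bumps), although for the argument below even $f=\eta$ works. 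Since $I-\tau$ is a bi-Lipschitz diffeomorphism, $L_\tau f$ is well defined, and the substitution $z=y-\tau(y)$ yields
\[
\|L_\tau f\|_{L^2}^2=\int_\bR|f(z)|^2\big(1-\tau'((I-\tau)^{-1}z)\big)^{-1}\,dz=(1-s)^{-1}\|f\|_{L^2}^2\ \neq\ \|f\|_{L^2}^2 .
\]

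\emph{Conclusion via energy conservation.} Under the Littlewood--Paley identity \eqref{eq LP} the one-step filtering $g\mapsto\big(g*\phi_{2^{J'}},\,(g*\psi_\lambda)_{\lambda\in\Lambda_{J'}}\big)$ is an $L^2$-isometry for every $J'$, and since $\widehat\psi$ is compactly supported the wavelet is admissible; hence the scattering energy is conserved, $\|S_{J'}[\cP_{J'}]g\|=\|g\|_{L^2}$ for every $g\in L^2(\bR)$ and $J'\in\bZ$ (cf.\ \cite{mall cpam}). Combining this with the previous two steps and the reverse triangle inequality gives, uniformly in $J\in\bZ$ and $n\in\bN$,
\[
\big\|S_J[\cP_J](L_{\tau_n}f_n)-S_J[\cP_J](f_n)\big\|\ \ge\ \big|\,\|L_{\tau_n}f_n\|_{L^2}-\|f_n\|_{L^2}\,\big|=\big((1-s)^{-1/2}-1\big)\|f\|_{L^2}=:C>0 ,
\]
which is \eqref{eq cont 0}, and hence \eqref{eq cons0}.

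\emph{Where the real content is.} The only non-formal ingredient is the last step. Each fixed scattering coefficient $S_{J'}[p]g$ tends to $0$ in $L^2$ as $J'\to+\infty$, because the averaging filter $\phi_{2^{J'}}$ spreads out; consequently no finite family of coefficients can detect the instability at large scales, and what survives is the total energy, which has migrated to arbitrarily deep paths. Capturing this requires the full norm-preservation of $S_{J'}$, and it is here that the strength of \eqref{eq LP} together with the admissibility afforded by $\widehat\psi$ being compactly supported are used; absent such a clean identity one would be forced into a direct estimate of deep-path coefficients, which is the genuine obstacle. (At the opposite end $J'\to-\infty$ one always has the elementary bound $\|S_{J'}[\cP_{J'}](L_\tau f)-S_{J'}[\cP_{J'}](f)\|\ge\|(L_\tau f-f)*\phi_{2^{J'}}\|_{L^2}\to|\widehat\phi(0)|\,\|L_\tau f-f\|_{L^2}>0$, which would cover those scales under weaker hypotheses; the specific requirement $\supp\widehat\psi\subset(0,+\infty)$ mainly keeps the filter frame a minimal cover of $\bR\setminus\{0\}$ and streamlines the explicit construction.)
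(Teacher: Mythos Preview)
Your argument is correct and is genuinely simpler than the paper's. Both proofs begin identically: use scale covariance \eqref{eq cov scatt} to reduce to a uniform-in-$J$ lower bound for $n=0$, and invoke Waldspurger's decay result (valid here because $\widehat\psi$ is compactly supported in $(0,+\infty)$, so the hypotheses of Proposition~\ref{pro embed} are met) to obtain that $S_J[\cP_J]$ is norm-preserving. From there the routes diverge. The paper takes $\tau(x)=-\tfrac{A}{N}\sin(Nx)\varphi(x)$ and $f(x)=x$ on $[0,2\pi]$, so that $g\coloneqq L_\tau f-f=-\tau$ has Fourier support near $\pm N$ while $f$ is low-frequency; it then uses the additivity Lemma~\ref{lemma sep} to separate $S_J(L_\tau f)$ into $S_J(f)+S_J(g)$ up to rapidly decaying tails, and concludes $\|S_J(L_\tau f)-S_J(f)\|\gtrsim\|g\|_{L^2}\approx 1/N$. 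You instead pick $\tau$ equal to a genuine dilation $sx$ near $\supp f$, so that $\|L_\tau f\|_{L^2}=(1-s)^{-1/2}\|f\|_{L^2}$, and conclude directly via the reverse triangle inequality and norm preservation.

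Your route bypasses the frequency-separation lemma and all the tail estimates \eqref{eq cont 6}--\eqref{eq cont 8}, at the cost of being less informative about the mechanism the authors emphasize in the introduction: their $\tau$ creates a small high-frequency bump that travels along different scattering paths even though $L_\tau$ is, on average, nearly norm-preserving (the Jacobian $1-\tau'$ oscillates around $1$). That picture is what motivates the later discussion of ``structural'' instability, and it also underlies the variants in Proposition~\ref{pro pro0}. For the theorem as stated, however, your proof suffices and is cleaner. One minor point: your citation ``cf.\ \cite{mall cpam}'' for norm preservation should really point to Waldspurger's result \cite{Waldspurger}, since Mallat's admissibility condition in \cite[Theorem~2.6]{mall cpam} is a different (and not obviously implied) hypothesis; the paper itself makes this distinction explicit in the footnote at the start of its proof.
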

Results in the same spirit hold as well if the $L^2$-norm is replaced by the scattering norm $\|U[\cP_J]f\|_1$, see Proposition \ref{pro pro0} below.

Notice that the functions $\tau_n\in C^\infty(\bR;\bR)$ are all  supported in a fixed compact interval, say $I\subset\bR$. Moreover, $\|\tau'_n\|_{L^\infty}\le 1/2$ for every $n$, and $\|\tau_n\|_{C^\alpha}\to 0$ as $n\to\infty$ for every $0\leq\alpha<1$ (by \eqref{eq interp} below). 

To better frame the previous result, consider the set
\begin{equation} \label{eq def cB}
\mathcal{B}_{1/2}=\{\tau\in C^\infty(\mathbb{R};\mathbb{R}): {\rm supp}\,(\tau)\subset I,\ \|\tau'\|_{L^\infty}\le 1/2\},
\end{equation}
equipped with the $C^\alpha$ metric\footnote{On $\cB_{1/2}$ the $C^\alpha$ \textit{topology}, $0\leq\alpha<1$ (but not the metric) is equivalent to the $C^0$ topology, because of the interpolation inequalities \eqref{eq interp} below.}, $0\leq \alpha<1$. By the Taylor formula, if $\tau_1,\tau_2\in\cB_{1/2}$ then $\|L_{\tau_1}f-L_{\tau_2} f\|_{L^2}\lesssim \|\tau_1-\tau_2\|_{\infty}\|f'\|_{L^2}$. Hence, since $S_J[\cP_J]$ is nonexpansive,
for every $J\in\bZ$ and every \textit{fixed} $f\in H^1(\mathbb{R})$ (Sobolev space) -- in particular for each $f_n$ as above -- the map $ \mathcal{B}_{1/2}\to\ell^2(\cP_J;L^2(\bR))$ given by
$\tau\mapsto S_J[\cP_J](L_\tau f)$ is Lipschitz continuous (cf.\ also \cite{koller}). On the other hand, Theorem \ref{thm mainthm0} provides a lower bound for the blow-up rate of the Lipschitz constant, depending on $\alpha$, when the input data become progressively less regular.

The instability results in Theorem \ref{thm mainthm0} can be heuristically explained as follows. Consider a smooth signal $f$ with unit $L^2$ norm. The deformed signal $L_\tau f$ has a certain low-frequency mass, but a relatively small energy bump in a quite far dyadic frequency band may occur even if $\|\tau'\|_{L^\infty}\le 1/2$. The latter will propagate along different scattering paths, thus preventing the quantity $\|S_J[\cP_J](L_\tau f)-S_J[\cP_J](f)\|$ from being too small -- assuming that $S_J[\cP_J]$ preserves the norm, which is a consequence of the assumptions in Theorem \ref{thm mainthm0}. A simple scaling argument shows that the same phenomenon can happen even when $\|\tau\|_{C^\alpha}\to 0$, $0\leq \alpha<1$, along with a corresponding loss of regularity for $f$. 

To summarize, Theorem \ref{thm mainthm0} and Proposition \ref{pro pro0} below show that, as far as the Lipschitz continuity under $C^\alpha$ deformation is concerned, the threshold $\alpha=1$ is critical, both for functions in $L^2$ and for functions with finite scattering norm. On the other hand, we have the positive result \eqref{eq mallat0} in the case $\alpha=2$. The following stability result essentially fills this gap -- we assume the same condition on the filters as in \cite{mall cpam} (see Section \ref{sec a brief review}). 
\begin{theorem}\label{thm mainthm1}
Consider $0<\alpha<1$. There exists a constant $C>0$ such that, for all $J \in \bZ$, $f \in L^2(\rd)$ with $\| U[\cP_J]f\|_1 < \infty$, and $\tau \in C^{1+\alpha}(\rd;\rd)$, with $\|D\tau\|_{L^\infty}\le 1/2$, 
\begin{equation}\label{eq main fm1}
        \| S_J[\cP_J](L_\tau f) - S_J[\cP_J](f) \| \le  C K_{1+\alpha}(\tau) \|U[\cP_J]f\|_1,
\end{equation} with
\[ K_{1+\alpha}(\tau)= 2^{-J}\|\tau\|_{L^\infty} + \badmax \|D\tau\|_{L^\infty} + |D\tau|_{\Ca} .\]
\end{theorem}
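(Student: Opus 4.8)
The plan is to follow the architecture of Mallat's proof of \eqref{eq mallat0} in \cite{mall cpam}, isolating the single place where the $C^2$ hypothesis is genuinely used and replacing the second-order Taylor estimate by its $C^{1+\alpha}$ counterpart. The first step is the purely ``soft'' reduction of scattering stability to a one-step commutator bound. Writing $W_J$ for the single-layer wavelet transform $g\mapsto(g*\phi_{2^J},(g*\psi_\lambda)_{\lambda\in\Lambda_J})$, which is nonexpansive from $L^2(\rd)$ into $L^2(\rd)\oplus\ell^2(\Lambda_J;L^2(\rd))$ by the Littlewood--Paley condition \eqref{eq LP}, and using that the modulus commutes with $L_\tau$ and is $1$-Lipschitz, one telescopes along the scattering cascade exactly as in \cite{mall cpam} to obtain
\[ \| S_J[\cP_J](L_\tau f) - S_J[\cP_J](f)\| \ \le\ \big\| W_J L_\tau - L_\tau W_J \big\|_{\op}\, \|U[\cP_J]f\|_1, \]
where $L_\tau$ acts componentwise on the output and $\|\cdot\|_{\op}$ is the operator norm between the spaces above. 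This step does not see the regularity of $\tau$, so the whole problem reduces to proving that the operator norm on the right is $\le C K_{1+\alpha}(\tau)$, with $C$ independent of $J$ (but allowed to depend on $\alpha$ and on the filters).

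Next I would split $W_J$ into its low-pass channel $g\mapsto g*\phi_{2^J}$ and its wavelet channels $g\mapsto(g*\psi_\lambda)_\lambda$, so that the commutator norm splits into two contributions. For the low-pass channel the bound $\lesssim 2^{-J}\|\tau\|_{L^\infty}$ is obtained exactly as in \cite{mall cpam}: it uses only $\|\tau\|_{L^\infty}$, the scaling $\|\nabla\phi_{2^J}\|_{L^1}\sim 2^{-J}$, and the estimates $\tfrac12\le|\det(I-D\tau)|\le\tfrac32$ for the change of variables -- all of which survive under $C^{1+\alpha}$ regularity. For the wavelet channels one has to estimate $\sum_{\lambda\in\Lambda_J}\|\,(\cdot*\psi_\lambda)\circ L_\tau - L_\tau\circ(\cdot*\psi_\lambda)\,\|^2$ after testing on $g\in L^2(\rd)$, and the natural way is to organize the sum by the dyadic spatial scale $\ell=2^{-j}$ of $\psi_\lambda$ and, for each reference point, to insert the Taylor decomposition $\tau(x)-\tau(y)=D\tau(x)(x-y)+\rho(x,y)$. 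For the coarse scales $\ell\gtrsim 1$ one does \emph{not} use the remainder: estimating $|\tau(x)-\tau(y)|$ crudely by $\min\{\|D\tau\|_{L^\infty}|x-y|,\ \|\Delta\tau\|_{L^\infty}\}$ and comparing $\cdot*\psi_\lambda$ precomposed with the affine map induced by $D\tau$ against a sheared copy of $\psi_\lambda$ lying in the same frequency band (bundling in the Jacobian factor $|\det(I-D\tau)|^{-1}-1=O(\|D\tau\|_{L^\infty})$), one reproduces verbatim Mallat's estimate, whose total contribution over these scales is $\lesssim\badmax\,\|D\tau\|_{L^\infty}$ and involves nothing beyond $\|D\tau\|_{L^\infty}$ and $\|\Delta\tau\|_{L^\infty}$.

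The new ingredient is the fine scales $\ell=2^{-j}\le 1$. Using that $\|D\tau\|_{L^\infty}\le1/2$, the usual localization of the commutator kernel forces the relevant increment of $\tau$ to be $\lesssim\ell$; since $\tau\in C^{1+\alpha}$, integrating $|D\tau(x)-D\tau(y)|\le|D\tau|_{\Ca}|x-y|^\alpha$ yields $|\rho(x,y)|\le|D\tau|_{\Ca}|x-y|^{1+\alpha}\lesssim|D\tau|_{\Ca}\,\ell^{1+\alpha}$, and the same Hölder seminorm controls the oscillation of the Jacobian and the first-order corrections over the kernel support. Feeding this into the Schur/cancellation bounds, the contribution of the scale $\ell$ to the operator norm is $\lesssim\ell^{-1}\cdot|D\tau|_{\Ca}\,\ell^{1+\alpha}=|D\tau|_{\Ca}\,\ell^{\alpha}$, and since $\alpha>0$ the scale sum converges:
\[ \sum_{j\ge 0}|D\tau|_{\Ca}\,2^{-j\alpha}\ =\ \frac{|D\tau|_{\Ca}}{1-2^{-\alpha}}\ \le\ C_\alpha\,|D\tau|_{\Ca}. \]
This plays exactly the role that $\|D^2\tau\|_{L^\infty}$ plays in \cite{mall cpam}, where one instead sums $\|D^2\tau\|_{L^\infty}\,2^{-j}$; note that $C_\alpha\to\infty$ as $\alpha\to 0^+$, in agreement with the instability of Theorem \ref{thm mainthm0}. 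Collecting the three contributions gives $\|W_JL_\tau-L_\tau W_J\|_{\op}\le C K_{1+\alpha}(\tau)$ and hence \eqref{eq main fm1}, with uniformity in $J$ automatic since $J$ enters only through the low-pass term $2^{-J}\|\tau\|_{L^\infty}$.

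The main obstacle is not the geometric sum above -- which is essentially immediate once the scheme is set up -- but the careful bookkeeping needed to be sure that every estimate in \cite{mall cpam} phrased for $C^2$ deformations does go through under the weaker hypothesis. Concretely one must verify that: (i) the low-pass and the ``affine $+$ Jacobian'' estimates really need nothing beyond $\|\tau\|_{L^\infty}$, $\|D\tau\|_{L^\infty}$, $\|\Delta\tau\|_{L^\infty}$, so that no hidden $\|D^2\tau\|_{L^\infty}$ creeps in; (ii) the change of variables associated with the bi-Lipschitz map $I-\tau$, whose Jacobian is now merely of class $C^\alpha$ (still bounded above and away from zero), enters only through bounds stable under $C^{1+\alpha}$ regularity; (iii) all the kernel manipulations -- integration by parts, cancellation -- differentiate only the smooth filters $\phi,\psi$, never $\tau$, so that the unique genuine appeal to second-order information on $\tau$ is the Taylor remainder replaced above; and (iv) the fixed dyadic threshold separating the ``fine'' and ``coarse'' regimes is chosen at a $\tau$-independent, $J$-independent scale (conveniently $\ell\sim1$), so that the fine-scale sum has a $\tau$-free and $J$-free prefactor.
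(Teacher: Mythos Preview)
Your overall architecture is right---reduction to the one-layer commutator $[W_J,L_\tau]$, then a scale-by-scale analysis---but there is a genuine gap. You claim that the only place $\|D^2\tau\|_{L^\infty}$ enters Mallat's argument is the second-order Taylor remainder, and that replacing it by $|\rho(x,y)|\le |D\tau|_{\Ca}|x-y|^{1+\alpha}$ suffices. This replacement handles only \emph{part} of the fine-scale contribution. In Mallat's decomposition (followed in the paper in Section~\ref{sec rp}), the fine-scale pieces $K_j=Z_j-L_\tau Z_j L_\tau^{-1}$ for $j\ge0$ split as $K_j=K_{j,1}+K_{j,2}$, where $K_{j,2}$ carries the Taylor remainder and indeed admits $\|K_{j,2}\|\lesssim 2^{-j\alpha}|D\tau|_{\Ca}$---that is your geometric sum. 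But $K_{j,1}$, with kernel $2^{dj}g(u,2^j(x-u))$ and $g(u,v)=h(v)-\det(I-D\tau(u))\,h((I-D\tau(u))v)$, satisfies only $\|K_{j,1}\|\lesssim\|D\tau\|_{L^\infty}$ \emph{per scale}, with no decay in $j$. Hence $\sum_{j\ge0}\|K_{j,1}\|^2$ diverges and $\big\|\sum_{j\ge0}K_{j,1}^\ast K_{j,1}\big\|$ cannot be controlled by crude summation; your single-scale bound $|D\tau|_{\Ca}\,2^{-j\alpha}$ does not see this piece at all.

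What is actually needed---and what the paper does---is an almost-orthogonality (Cotlar) estimate $\|Q_lQ_j\|\lesssim 2^{-|j-l|\alpha}\big(\|D\tau\|_{L^\infty}+|D\tau|_{\Ca}\big)^4$ for $Q_j=K_{j,1}^\ast K_{j,1}$. In Mallat's $C^2$ proof this comes from an integration by parts in the base-point variable $u$; since $g(u,v)$ depends on $u$ through $D\tau(u)$, the $u$-derivative brings in $D^2\tau$. So this is a \emph{second} genuine use of higher regularity of $\tau$, contrary to your point (iii). The paper circumvents differentiating $\tau$ twice by a real-interpolation argument: one proves $\varphi(G)\lesssim\|D\tau\|_{L^\infty}^2\|G\|_{C^0}$ and $\varphi(G)\lesssim 2^{l-j}\|D\tau\|_{L^\infty}^2\|G\|_{C^1}$ for a subadditive functional $\varphi$ encoding the kernel of $Q_lQ_j$, interpolates to $\varphi(G)\lesssim 2^{(l-j)\alpha}\|D\tau\|_{L^\infty}^2\|G\|_{\Ca}$, and then establishes the Schauder-type bound $|g(\cdot,v)|_{\Ca}\lesssim(1+|v|)^{-d-1}\big(\|D\tau\|_{L^\infty}+|D\tau|_{\Ca}\big)$ by tracking H\"older regularity through the dependence $u\mapsto D\tau(u)$. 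This Cotlar step is where most of the new work lies, and your proposal needs to incorporate it.
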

The definition of the $C^\alpha$ seminorm $|\cdot|_{C^\alpha}$ is recalled in Section \ref{sec notation}.  
This result arises as a refinement of \cite[Theorem 2.12]{mall cpam}, with which it shares the backbone structure of the proof. A careful inspection of the latter suggests that lower levels of deformation complexity (such as logarithmic H\"older regularity) could still give rise to stability results. A substantial rearrangement of some parts of the proof strategy is expected to accommodate even lower regularity levels, such as Dini continuous deformations. In any case, we preferred to keep the technicalities at a minimum and to use the more natural $C^\alpha$ scale, also in view of applications. 

The combination of the previous results provides us with a substantially complete picture on the interplay between stability and deformation regularity. Notably, the case of Lipschitz (or even  $C^1(\rd;\rd)$) distortions remains open. A dimensional argument shows that, for $f\in L^2(\rd)$, the expected bound would have the form
\begin{equation}\label{eq c1 ideal}
 \| S_J[\cP_J](L_\tau f) - S_J[\cP_J](f) \| \le  C \big( 2^{-J}\|\tau\|_{L^\infty} +\|D\tau\|_{L^\infty}\big)\|f\|_{L^2}.
\end{equation}
While proving this estimate is definitely an ambitious goal, this problem seems to be out of reach at the current time. Interestingly, we are able to show that it holds up to arbitrarily small losses, at least in dimension $1$. As customary in harmonic analysis, to accomplish this goal we consider the case of band-limited functions $f$, with $\widehat{f}$ supported in the frequency ball $|\omega|\leq R$, say, and determine the blow-up rate in the above regime as $R\to+\infty$. The following stability result for Lipschitz deformations shows that such a rate is indeed smaller than $R^\varepsilon$ for every $\varepsilon>0$.

First, we assume that there are $C,\beta>0$ such that
\begin{equation}\label{eq emb int} 
\|U[\cP_0]f\|_1\leq C\log^\beta(e+R)\|f\|_{L^2}
\end{equation}
for every $f\in L^2(\rd)$ with $\widehat{f}(\omega)$ supported in the ball $|\omega|\leq R$. Such an estimate holds in dimension $d=1$, for every $\beta>1$, as a consequence of Proposition \ref{pro embed} below under an admissibility condition on the filters detailed in \cite{Waldspurger}. There is reason to believe that such a logarithmic bound holds in arbitrary dimension (cf.\ for instance \cite[Lemma 2.8]{mall cpam} and the related remarks). 

\begin{theorem}\label{thm mainthm 3} Assume \eqref{eq emb int}. For every $\varepsilon>0$ there exists $C>0$ such that, for every $\tau \colon \rd\to\rd$ bounded and globally Lipschitz, with $\|D\tau\|_{L^\infty}\leq 1/2$, and every $f\in L^2(\rd)$ with $\widehat{f}(\omega)$ supported in the ball $|\omega|\leq R$, $R>0$, and every $J\in\bZ$, we have 
\begin{equation}\label{eq ideal esp} 
\| S_J[\cP_J](L_\tau f) - S_J[\cP_J](f) \| \le C \big( \log^\beta(e+2^J R)2^{-J}\|\tau\|_{L^\infty} +(1+2^J R)^\varepsilon\|D\tau\|_{L^\infty}\big)\|f\|_{L^2}
 .
\end{equation}
\end{theorem}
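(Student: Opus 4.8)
The plan is a reduction to the case $J=0$ by dilation, a regularization of $\tau$ bringing it into $C^{1+\alpha}$ for $\alpha$ as small as we please, and then an application of Theorem \ref{thm mainthm1} together with \eqref{eq emb int}, with the mollification scale optimized against the bandwidth. By the dilation covariance of the scattering transform -- $\|S_J[\cP_J](g)\|=\|S_0[\cP_0](g_{(2^{-J})})\|$ with $g_{(a)}(x)=a^{-d/2}g(x/a)$, under which $L_\tau$ is conjugated to $L_{\tau'}$ with $\tau'(x)=2^{-J}\tau(2^Jx)$ (so $\|\tau'\|_{L^\infty}=2^{-J}\|\tau\|_{L^\infty}$, $\|D\tau'\|_{L^\infty}=\|D\tau\|_{L^\infty}$, $\|\Delta\tau'\|_{L^\infty}=2^{-J}\|\Delta\tau\|_{L^\infty}$, while $\widehat f$ acquires Fourier support in $|\omega|\le2^JR$) -- it suffices to prove the bound for $J=0$, with $2^JR$ replaced by $R$; I relabel accordingly and assume $\widehat f$ supported in $\{|\omega|\le R\}$.

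Fix a mollifier $\rho\in C_c^\infty(\rd)$, $\rho\ge0$, $\int\rho=1$, set $\rho_\delta(x)=\delta^{-d}\rho(x/\delta)$ and $\tau_\delta\coloneqq\tau*\rho_\delta$ for $\delta\in(0,1]$ to be chosen. Then $\|\tau-\tau_\delta\|_{L^\infty}\lesssim\delta\|D\tau\|_{L^\infty}$, $\|D\tau_\delta\|_{L^\infty}\le\|D\tau\|_{L^\infty}\le1/2$, $\|\tau_\delta\|_{L^\infty}\le\|\tau\|_{L^\infty}$, $\|\Delta\tau_\delta\|_{L^\infty}\le\|\Delta\tau\|_{L^\infty}\le2\|\tau\|_{L^\infty}$, and, since $D\tau_\delta=(D\tau)*\rho_\delta$ and $\|\rho_\delta(\cdot+h)-\rho_\delta\|_{L^1}\lesssim\min\{1,|h|/\delta\}$, also $|D\tau_\delta|_{\Ca}\lesssim_\alpha\delta^{-\alpha}\|D\tau\|_{L^\infty}$. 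Now split $\|S_0[\cP_0](L_\tau f)-S_0[\cP_0](f)\|\le(\mathrm{I})+(\mathrm{II})$ with $(\mathrm{I})=\|S_0[\cP_0](L_\tau f)-S_0[\cP_0](L_{\tau_\delta}f)\|$ and $(\mathrm{II})=\|S_0[\cP_0](L_{\tau_\delta}f)-S_0[\cP_0](f)\|$. For $(\mathrm{I})$: nonexpansivity of $S_0[\cP_0]$, interpolation along $\tau_t=\tau_\delta+t(\tau-\tau_\delta)$ (all satisfying $\|D\tau_t\|_{L^\infty}\le1/2$), the chain rule and the change of variables $z=y-\tau_t(y)$ (Jacobian $\ge2^{-d}$), and Bernstein's inequality for band-limited functions, $\|\nabla f\|_{L^2}\lesssim R\|f\|_{L^2}$, yield
\[
(\mathrm{I})\le\|L_\tau f-L_{\tau_\delta}f\|_{L^2}\lesssim\|\tau-\tau_\delta\|_{L^\infty}\|\nabla f\|_{L^2}\lesssim R\,\delta\,\|D\tau\|_{L^\infty}\|f\|_{L^2}.
\]

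For $(\mathrm{II})$, since $\tau_\delta\in C^{1+\alpha}(\rd;\rd)$ with $\|D\tau_\delta\|_{L^\infty}\le1/2$, Theorem \ref{thm mainthm1} (at scale $J=0$) and the assumption \eqref{eq emb int} give
\[
(\mathrm{II})\le CK_{1+\alpha}(\tau_\delta)\|U[\cP_0]f\|_1\le C\log^\beta(e+R)\big(\|\tau_\delta\|_{L^\infty}+\kappa_\delta\|D\tau_\delta\|_{L^\infty}+|D\tau_\delta|_{\Ca}\big)\|f\|_{L^2},
\]
where $\kappa_\delta=\max\{\log(\|\Delta\tau_\delta\|_{L^\infty}/\|D\tau_\delta\|_{L^\infty}),1\}$. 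Here $\|\tau_\delta\|_{L^\infty}\le\|\tau\|_{L^\infty}$; the elementary inequality $s\max\{\log(A/s),1\}\le s+A/e$, valid for all $s,A>0$, gives $\kappa_\delta\|D\tau_\delta\|_{L^\infty}\le\|D\tau_\delta\|_{L^\infty}+\|\Delta\tau_\delta\|_{L^\infty}/e\lesssim\|D\tau\|_{L^\infty}+\|\tau\|_{L^\infty}$; and $|D\tau_\delta|_{\Ca}\lesssim_\alpha\delta^{-\alpha}\|D\tau\|_{L^\infty}$. Finally, given $\varepsilon>0$ take $\alpha=\min\{\varepsilon/2,1/2\}$ and $\delta=(1+R)^{-1}\in(0,1]$: then $(\mathrm{I})\lesssim\|D\tau\|_{L^\infty}\|f\|_{L^2}$, and, using $\log^\beta(e+R)\lesssim_{\alpha,\beta}(1+R)^\alpha$, the $\|D\tau\|_{L^\infty}$-contributions to $(\mathrm{II})$ are $\lesssim_{\varepsilon,\beta}(1+R)^{2\alpha}\|D\tau\|_{L^\infty}\|f\|_{L^2}\le(1+R)^\varepsilon\|D\tau\|_{L^\infty}\|f\|_{L^2}$, while the $\|\tau\|_{L^\infty}$-contributions are $\lesssim\log^\beta(e+R)\|\tau\|_{L^\infty}\|f\|_{L^2}$. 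Summing and undoing the reduction of the first step gives \eqref{eq ideal esp}.

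The crux is the term $(\mathrm{II})$: a merely Lipschitz $\tau$ has $|D\tau|_{\Ca}=+\infty$, so Theorem \ref{thm mainthm1} is not directly applicable; mollification replaces this seminorm by $\delta^{-\alpha}\|D\tau\|_{L^\infty}$ at the price of the error term $(\mathrm{I})$, which carries a full power of the bandwidth $R$. Balancing the two forces $\delta\sim R^{-1}$ and leaves the residual factor $\delta^{-\alpha}\sim R^\alpha$ -- irremovable because Theorem \ref{thm mainthm1} requires $\alpha>0$ -- which is precisely the origin of the $(1+2^JR)^\varepsilon$ loss, and makes it plausible that the bound \eqref{eq c1 ideal} itself lies out of reach of this strategy. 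A secondary, more routine point is to keep the logarithmic factor $\kappa_\delta$ of Theorem \ref{thm mainthm1} bounded uniformly over the possibly highly oscillatory field $\tau_\delta$, which is what the elementary inequality above achieves; one should also double-check that the dilation covariance used in the first step is compatible with the precise weights appearing in \eqref{eq ideal esp}, which it is.
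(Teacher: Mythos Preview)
Your proof is correct and follows essentially the same conceptual scheme as the paper --- reduce to $J=0$ by scaling, approximate $\tau$ by a smoother field, control the approximation error via the band-limit on $f$ through $\|\nabla f\|_{L^2}\lesssim R\|f\|_{L^2}$, apply the $C^{1+\alpha}$ stability result together with \eqref{eq emb int} to the smoothed field, and balance scales --- but you implement it by a considerably more elementary route.

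The paper carries out the approximation step via a formal nonlinear real-interpolation argument in Besov spaces: it takes $\tau_0$ among the Littlewood-Paley truncations $\phi_j(D)\tau$, uses the two endpoint estimates (the $C^{1+\alpha}$ bound of Theorem~\ref{thm mainthm1} in the form of Remark~\ref{rem J1}, and the $L^\infty$ continuity bound for $L_{\tau_0}-L_{\tau_1}$), and then invokes a variant of the $K$-functional together with the embedding $B^1_{\infty,\infty}\hookrightarrow (B^s_{\infty,\infty},B^0_{\infty,1})_{\theta,\infty}$. You instead take $\tau_\delta=\tau\ast\rho_\delta$, record the standard mollifier bounds (in particular $|D\tau_\delta|_{C^\alpha}\lesssim\delta^{-\alpha}\|D\tau\|_{L^\infty}$), and optimize in $\delta$ by hand. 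This is exactly the ``interpolation unpacked'' version of the paper's argument --- mollification at scale $\delta$ plays the role of the $j$-th Littlewood-Paley projection at scale $2^{-j}\sim\delta$ --- but it avoids any appeal to abstract interpolation theory and Lemma~\ref{lem inter}. Your treatment of the logarithmic weight $\kappa_\delta$ via $s\max\{\log(A/s),1\}\le s+A/e$ is a neat extra step; the paper sidesteps this entirely by working from Remark~\ref{rem J1} (where the factor is $\max\{J,1\}=1$ at $J=0$), which you could also have invoked to simplify. The paper's machinery is more systematic and would adapt more readily to other endpoint spaces, while your approach is shorter, self-contained, and arguably more transparent about why the $(1+2^JR)^\varepsilon$ loss is unavoidable by this method.
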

The proof is based on a nonlinear interpolation argument in the setting of Besov spaces, which is in turn a refinement of a classical technique that has already been successfully developed in the literature in connection with nonlinear estimates for PDEs \cite{lions,maligranda,peetre,tartar}.


To conclude, we observe that it would be also very interesting to investigate similar stability issues for scattering-type transforms associated with other semi-discrete frames, such as curvelet or shearlet systems \cite{candes,guo}, in view of their prominent role in image processing. Also, from a mathematical perspective, it is natural to wonder whether the above results are robust enough to encompass more general operators than $L_\tau$, Fourier integral operators being the natural candidates \cite{candesfio}. We postpone the study of these problems, that require novel ideas and techniques, to future works.


\section{Preliminaries and review of the scattering transform}\label{sec notation}

\subsection{Notation} 
{\setlength{\parindent}{0cm} 
\setlength{\parskip}{0.2cm}
The open ball of $\rd$ centered at $x_0$ with radius $r>0$ is denoted by $B_r(x_0)$. For a differentiable map $\tau \colon \rd\to\rd$, we denote by $D\tau(x)$ its derivative as a linear map $\rd\to\rd$, hence we write $|D\tau(x)|$ for the operator norm of this map and also set $\|D\tau\|_{L^\infty}=\| |D\tau| \|_{L^\infty}$. Similarly, for a scalar-valued function $f$, $\|\nabla f\|_{L^\infty}=\| |\nabla f| \|_{L^\infty}$.

The Fourier transform of $f$ 
is normalized here as
	\[ \widehat{f}(\omega) =\mathcal{F}(f)(\omega)= \ird e^{-i\omega \cdot x} f(x)\, dx. \]


Given an index set $\Omega$ and a collection of operators $T[p] \colon L^2(\rd)\to L^2(\rd)$  indexed by $p\in\Omega$, we set \[ T[\Omega] = \{T[p]\}_{p \in \Omega}.\] Unless otherwise stated, the standard norm in this context is that of $\ell^2(\Omega;L^2(\rd))$, namely
\[ \| T[\Omega]f \|^2 = \sum_{p \in \Omega} \| T[p]f \|^2_{L^2}, \quad f \in L^2(\rd). \] 

In the proofs, for brevity, we will heavily make use of the symbol $A \lesssim B$, meaning that the underlying inequality holds up to a positive constant factor, namely 
\[ A \lesssim B \quad\Longrightarrow\quad\exists\, C>0\,:\,A \le C B. \] If the constant $C=C(\nu)$ depends on some parameter $\nu$ we write $A\lesssim_\nu B$. Moreover, $A \approx B$ means that $A$ and $B$ are \textit{equivalent quantities}, that is both $A \lesssim B$ and $B\lesssim A$ hold. 

In the rest of the note, all the derivatives are to be understood in the distribution sense, unless otherwise noted. }

\subsection{Relevant function spaces}	
Consider an open subset $A\subseteq \rd$ and set $Y=\bR^n$ or $Y=\bC$.
Given a nonnegative integer $k$ we introduce the space $C^k(A;Y)$ of all the continuously differentiable functions $f \colon A\to Y$ with bounded derivatives up to order $k$, with the natural norm $\|f\|_{C^{k}(A)}\coloneqq \max_{|\beta|\leq k}\sup_{x\in A} |\partial^\beta f(x)|$. 

We define the $\alpha$-H\"older seminorm, $0<\alpha < 1$, and the Lipschitz seminorm of $f \colon A\to Y$ by 
    \[ | f|_{C^\alpha(A)} \coloneqq \sup_{\substack{x,y \in A \\ x\ne y}} \frac{|f(x)-f(y)|}{|x-y|^\alpha},
\qquad | f|_{\Lip(A)} \coloneqq \sup_{\substack{x,y \in A \\ x\ne y}} \frac{|f(x)-f(y)|}{|x-y|}. 
\]
The space $C^\alpha(A;Y)$, $\alpha>0$, consists of all the functions $f \colon A\to Y$, continuously differentiable up to the order $[\alpha]$ (integer part of $\alpha$), such that 
\[
\|f\|_{C^\alpha(A)}:=\|f\|_{C^{[\alpha]}(A)}+ 
\sum_{|\gamma|=[\alpha]}|\partial^\gamma f|_{C^{\alpha-[\alpha]}(A)}<\infty.
\]
When there is no risk of confusion we usually omit the codomain $Y$ and also the domain in the case where $A=\rd$, writing for instance $C^\alpha$ in place of $C^\alpha(\rd;Y)$ for simplicity. We also recall the elementary interpolation inequality 
\begin{equation}\label{eq interp}
|f|_{C^\alpha}\leq 2^{1-\alpha} \|f\|^{1-\alpha}_{L^\infty}\|\nabla f\|_{L^\infty}^{\alpha}. 
\end{equation}

We collect here some basic properties that will be used below.  
\begin{proposition} Fix $A\subset\rd$ and $0<\alpha<1$.

\begin{itemize}[leftmargin=*]
\item (Fractional Leibniz rule) If $f,g \in C^{\alpha}(A;\bC)$, then
\begin{equation}\label{eq frac leib}
    | f g |_{\Ca(A)} \le | f |_{\Ca(A)} \|g \|_{L^\infty(A)} + \| f\|_{L^\infty(A)} |g |_{\Ca(A)}. 
\end{equation}

\item (Schauder estimates) Assume that $F\colon \bR^n \to \bC$ is Lipschitz. For $h \in \Ca(A;\bR^n)$,
\begin{equation}\label{eq schaud}
    | F(h) |_{\Ca(A)} \le | F|_{\Lip(h(A))} | h|_{\Ca(A)}. 
\end{equation}
\end{itemize}

\end{proposition}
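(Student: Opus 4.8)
The final statement to prove is the Proposition collecting the fractional Leibniz rule \eqref{eq frac leib} and the Schauder estimate \eqref{eq schaud}. These are standard facts about Hölder spaces; I'll sketch proofs.

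\bigskip

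\textbf{Proof plan for the fractional Leibniz rule \eqref{eq frac leib}.} The plan is to estimate the difference quotient of $fg$ directly using the algebraic identity $f(x)g(x)-f(y)g(y)=(f(x)-f(y))g(x)+f(y)(g(x)-g(y))$. Fixing $x\neq y$ in $A$, I would write
\[
\frac{|f(x)g(x)-f(y)g(y)|}{|x-y|^\alpha}\le \frac{|f(x)-f(y)|}{|x-y|^\alpha}|g(x)|+|f(y)|\frac{|g(x)-g(y)|}{|x-y|^\alpha}\le |f|_{C^\alpha(A)}\|g\|_{L^\infty(A)}+\|f\|_{L^\infty(A)}|g|_{C^\alpha(A)}.
\]
Taking the supremum over $x\neq y$ in $A$ gives the claim. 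There is no real obstacle here; it is a one-line computation. I would just remark that the same identity shows $fg$ is bounded, so $fg\in C^\alpha(A;\bC)$.

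\bigskip

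\textbf{Proof plan for the Schauder estimate \eqref{eq schaud}.} The plan is again to bound a difference quotient. For $x\neq y$ in $A$, since $h(x),h(y)\in h(A)$, the Lipschitz bound for $F$ on $h(A)$ gives $|F(h(x))-F(h(y))|\le |F|_{\Lip(h(A))}|h(x)-h(y)|$. Dividing by $|x-y|^\alpha$ yields
\[
\frac{|F(h(x))-F(h(y))|}{|x-y|^\alpha}\le |F|_{\Lip(h(A))}\frac{|h(x)-h(y)|}{|x-y|^\alpha}\le |F|_{\Lip(h(A))}|h|_{C^\alpha(A)},
\]
and taking the supremum over $x\neq y$ proves \eqref{eq schaud}. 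The only point worth noting is that $h(A)$ is the relevant domain for the Lipschitz seminorm of $F$, so the composition only sees the values $F$ takes on the range of $h$; no further hypotheses on $F$ outside $h(A)$ are needed. Again this is essentially immediate.

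\bigskip

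\textbf{Where the difficulty lies.} Neither estimate presents a genuine obstacle: both reduce to the elementary observation that a Hölder seminorm is a supremum of difference quotients, combined with one algebraic identity (a product rule for the first, composition with a Lipschitz map for the second). The only mild care needed is bookkeeping of the domains on which the various seminorms are taken (writing everything over $A$, and over $h(A)$ in the Schauder case) so that the statements are correct as displayed. I would present both proofs in a couple of lines each and move on.
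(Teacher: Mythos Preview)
Your proposal is correct and matches the paper's own proof essentially line for line: the same algebraic identity for the product and the same Lipschitz-then-divide argument for the composition. If anything, your Schauder argument is marginally cleaner, since applying the Lipschitz bound directly avoids the paper's need to single out the case $h(x)\neq h(y)$ before multiplying and dividing by $|h(x)-h(y)|$.
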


\begin{proof}
The fractional Leibniz rule is readily obtained by noting that for all $x,y \in \rd$, $x\ne y$,
\[ \frac{|f(x)g(x)-f(y)g(y)|}{|x-y|^\alpha} \le \frac{|f(x)-f(y)|}{|x-y|^\alpha}|g(x)| + |f(y)|\frac{|g(x)-g(y)|}{|x-y|^\alpha}.\]
The inequality in \eqref{eq schaud} follows similarly -- as long as $h(x) \ne h(y)$,
\[ \frac{|F(h(x))-F(h(y))|}{|x-y|^\alpha} \le \frac{|F(h(x))-F(h(y))|}{|h(x)-h(y)|} \frac{|h(x)-h(y)|}{|x-y|^\alpha}.\] 
\end{proof}


\subsection{A brief review of the wavelet scattering transform} \label{sec a brief review} In this section we gather some basic facts and results concerning the mathematical analysis of the scattering transform, mainly in order to fix the notation. More details can be found in \cite{brunamallat, mall cpam}.

The basic ingredient is a complex wavelet $\psi \in L^1(\rd) \cap L^2(\rd)$ with at least one vanishing moment ($\widehat{\psi}(0)=0$), satisfying appropriate conditions that are stated below. 
    
Let $G$ be a finite subgroup of rotations in $\rd$, also comprising the reflection operator $-I$. For every $\lambda = 2^j r$ with $j \in \bZ$ and $r \in G$ we set
\[ \psi_\lambda(x) \coloneqq 2^{jd} \psi(2^j r^{-1} x).
\]  The frequency filtering corresponding to $\psi_\lambda$ is thus obtained by convolution, namely for $f \in L^2(\rd)$ we set
\[ W[\lambda]f(x) \coloneqq f*\psi_\lambda (x) = \ird f(y) \psi_\lambda(x-y) dy, \quad x \in \rd.\]
If both $f$ and $\widehat{\psi}$ are real functions it is easy to realize that $W[-\lambda]f = \overline{W[\lambda]f}$. We thus conveniently consider the quotient $G^+ = G/\{\pm I\}$, so that all the pairs of rotations $r$ and $-r$ are identified.

\begin{remark}

As a concrete reference model one typically considers a Gabor-like wavelet $\psi$ such as 
\[
\psi(x) = e^{i \eta \cdot x}\theta(x), \quad x \in \rd, 
\] for some $\eta \in \rd$ and a function $\theta$ with real-valued Fourier transform $\widehat{\theta}$ essentially supported in a low-frequency ball centered at the origin with radius of the order of $\pi$. Then $\wh{\psi_\lambda}(\omega) = \widehat{\psi}(2^{-j}r^{-1}\omega)=\widehat{\theta}(\lambda^{-1}\omega-\eta)$ is concentrated in a ball centered at $\lambda \eta$ of approximate size $|\lambda|\coloneqq 2^j$.

\end{remark}
Given $J \in \bZ$ we introduce the index set 
\begin{equation}\label{eq def lambdaj}
\Lambda_J \coloneqq \{ \lambda=2^j r : j > -J, \, r \in G^+\}. \end{equation} 
The filter bank $W[\Lambda_J] \coloneqq \{W[\lambda]\}_{ \lambda \in \Lambda_J}$ is thus not able to detect a low-frequency component of a real signal $f$ corresponding to a region of the frequency space with size of the order of $2^{-J}$. Nevertheless, the latter can be captured by a suitable average $A_J$ with a dilated low-pass filter $\phi \in L^1(\rd)\cap L^2(\rd)$ such that $\phi$ is a non-negative real-valued function with $\widehat{\phi}(0)=1$ (having in mind a Gaussian function as a model), that is
\[ A_J f \coloneqq f * \phi_{2^J}, \qquad \phi_{2^J}(x) \coloneqq 2^{-Jd}\phi(2^{-J}x), \quad x \in \rd. \] To ensure that the frequency content of $f$ is fully preserved by a wavelet analysis at a scale $2^J$ it is enough that the supports of the filters obtained by dilations of $\phi$ and $\psi$ cover the whole frequency space. To be more precise, let $W_J f$ be the wavelet analysis of $f$ associated with $\phi$ and $\psi$, namely the collection of signal components indexed by $\{J\}\cup \Lambda_J$ given by
\[ W_J f \coloneqq \{ A_J f, W[\Lambda_J]f\}. \] It is not difficult to show that $W_J$ is an isometry from $L^2(\rd;\bR)$ (real-valued functions) to $\ell^2(\{J\}\cup \Lambda_J;L^2(\rd))$ if and only if the following Littlewood-Paley condition holds for almost every $\omega \in \rd$:
\begin{equation}\label{eq LP}
    |\widehat{\phi}(2^J \omega)|^2 + \frac{1}{2} \sum_{\lambda \in \Lambda_J}\left[ |\widehat{\psi}(\lambda^{-1}\omega)|^2 +|\widehat{\psi}(-\lambda^{-1}\omega)|^2\right]= 1.
\end{equation}  Hence
\[ \| f \|^2_{L^2} = \| W_J f \|^2_{\ell^2 L^2} \coloneqq \| A_J f \|_{L^2}^2 + \sum_{\lambda \in \Lambda_J} \|W[\lambda]f\|_{L^2}^2
\] 
(see \cite[Proposition 2.1]{mall cpam} for further details).
Slight modifications are needed in the case of complex-valued signals $f$ in order to accommodate all the rotations. The wavelet analysis $W_Jf$ is then accordingly defined including $W[-\Lambda_J]f$ as well, and unitarity of $W_J f$ is ensured by the condition 
\[ |\widehat{\phi}(2^J \omega)|^2 + \sum_{\lambda \in \Lambda_J} \left[ |\widehat{\psi}(\lambda^{-1}\omega)|^2+ |\widehat{\psi}(-\lambda^{-1}\omega)|^2 \right]= 1. \] We simply discuss below the case where $f \in L^2(\rd)$ takes real values to lighten the presentation. 

We say that $\phi$ and $\psi$ are \textit{scattering filters} if:
\begin{itemize}
    \item Given $J \in \bZ$ and a group of rotations $G$ as above, the condition in \eqref{eq LP} is satisfied.
    \item $\widehat{\psi}$ is real-valued, and  $\widehat{\phi}$ is real-valued and symmetric. Moreover, $\phi$ is non-negative and $\hat{\phi}(0)=1$. 
    \item Both $\phi(x)$ and $\psi(x)$ are twice differentiable and decay like $O((1+|x|)^{-d-3})$ together with their first and second partial derivatives\footnote{In \cite{mall cpam} it is assumed a decay condition $O((1+|x|)^{-d-2})$ instead. Nevertheless, it seems that even in that case the decay of order $-(d+3)$ is needed, for instance in order to make the integral in \cite[(E.26)]{mall cpam} convergent or to suitably bound the last terms in \cite[(E.30)]{mall cpam} in such a way to obtain \cite[(E.31)]{mall cpam}.}.
\end{itemize}
We now introduce the general index set $\Lambda_{\infty} \coloneqq 2^\bZ\times  G^+$ and the corresponding space $\cP_\infty \coloneqq \bigcup_{m\ge0} \Lambda_{\infty}^m$ of all the possible finite paths, where it is understood that $\Lambda_\infty^0=\{\emptyset\}$. The one-step scattering propagator $U[\lambda]$, $\lambda \in \Lambda_\infty$, coincides with a modulus wavelet localization: $U[\lambda]f \coloneqq |W[\lambda]f| = |f*\psi_\lambda|$. More generally, the path-ordered scattering propagator $U\colon \cP_\infty \times L^2(\rd) \to L^2(\rd)$ acts along a path $p=(\lambda_1, \ldots, \lambda_m) \in \Lambda_\infty^m$ of length $m\ge 1$ by
\[ U[p]f \coloneqq U[\lambda_m] \cdots U[\lambda_2]U[\lambda_1]f. \]
For the empty path $p=\emptyset$ we set $U[\emptyset]f=f$. 

The collection of all the paths with finite length and components in $\Lambda_J$ is $\cP_J \coloneqq \bigcup_{m=0}^\infty \Lambda_J^m$ (again $\Lambda_J^0 = \{\emptyset\}$). The windowed scattering transform $S_J[\cP_J]$ is then defined as follows:
\[ S_J[\cP_J] f = \{S_J[p] f\}_{ p \in \cP_J}, \quad S_J[p]f \coloneqq A_J U[p] f. \]

\begin{figure}[h!]
\begin{forest}
	treenode/.style = {align=center, inner sep=0pt, text centered, font=\sffamily},
	for tree={
	grow'=north,
	delay={
		edge label/.wrap value={node[midway, font=\sffamily\scriptsize, above]{#1}},
		},
			}
	[$f$, name=L0,
	[{$U [\lambda_1 ] f$}, name=L1sx ,
	[{$\scalemath{0.8}{U [\lambda_1,\lambda_2] f}$}, name=L2sx, [$\vdots$] [$\vdots$] [$\vdots$] [$\vdots$]
	]
	[, tikz={\draw [fill=white] (.anchor) circle[radius=2pt];}  [$\vdots$] [$\vdots$] 
	]
	[, tikz={\draw [fill=white] (.anchor) circle[radius=2pt];} [$\vdots$] [$\vdots$]
	]
	]
	[,tikz={\draw [fill=white] (.anchor) circle[radius=2pt];} 
	[, tikz={\draw [fill=white] (.anchor) circle[radius=2pt];} [$\vdots$]], [, tikz={\draw [fill=white] (.anchor) circle[radius=2pt];} [$\vdots$]]]
	[{$U [\lambda_1 ] f$}, name=L1dx,
	[, tikz={\draw [fill=white] (.anchor) circle[radius=2pt];} [$\vdots$] 
	]
	[,tikz={\draw [fill=white] (.anchor) circle[radius=2pt];} [$\vdots$]
	]
	[$\scalemath{0.8}{U [\lambda_1,\lambda_2] f}$, name=L2dx,
	[,tikz={\draw [fill=white] (.anchor) circle[radius=2pt];}  [$\vdots$][$\vdots$][$\vdots$]
	]
	[$\cdots$,
	[$\scalemath{0.8}{U [\lambda_1,\lambda_2,\ldots,\lambda_m] f}$, name=Ldxend,  [$\vdots$,for ancestors={edge={red,thick}}] [$\vdots$] [$\vdots$]]
	]
	]
	]
	]
	]
	\draw [blue, dashed, ->, shorten >=3pt] (L0) -- ++(-27pt,-20pt) node [circle,fill=blue,inner sep=1pt,minimum size=3pt,label={[xshift=0pt, yshift=-15pt]  $\scalemath{0.7}{f*\phi_{2^J}}$}] {};;
	\draw [blue, dashed, dashed, ->, shorten >=3pt] (L1sx) -- ++(-40pt,-30pt) node [circle,fill=blue,inner sep=1pt,minimum size=3pt,label={[xshift=0pt,yshift=-20pt] $\scalemath{0.7}{(U [\lambda_1 ] f )*\phi_{2^J}}$}] {};;
	\draw [blue, dashed, ->, shorten >=3pt] (L2sx) -- ++(-40pt,-30pt) node [circle,fill=blue,inner sep=1pt,minimum size=3pt,label={[xshift=0pt,yshift=-20pt] $\scalemath{0.7}{(U [\lambda_1, \lambda_2] f )*\phi_{2^J}}$}] {};;
	\draw [blue, dashed, ->, shorten >=3pt] (L1dx) -- ++(50pt,-25pt) node [circle,fill=blue,inner sep=1pt,minimum size=3pt,label={[xshift=0pt,yshift=-20pt] $\scalemath{0.7}{(U [\lambda_1 ] f )*\phi_{2^J}}$}] {};;		\draw [blue, dashed, ->, shorten >=3pt] (L2dx) -- ++(50pt,-25pt) node [circle,fill=blue,inner sep=1pt,minimum size=3pt,label={[xshift=0pt,yshift=-20pt] $\scalemath{0.6}{(U [\lambda_1,\lambda_2] f)*\phi_{2^J}}$}] {};;			
	\draw [blue, dashed, ->, shorten >=3pt] (Ldxend) -- ++(50pt,-25pt) node [circle,fill=blue,inner sep=1pt,minimum size=3pt,label={[xshift=0pt,yshift=-20pt] $\scalemath{0.6}{(U [\lambda_1,\lambda_2,\ldots,\lambda_m] f)*\phi_{2^J}}$}] {};;			
			\end{forest}
		\caption{The scattering network architecture, as described above. The index $\lambda_l \in \Lambda_J$ corresponds to the $l$-th layer. In blue: some features. In red: an example of a path $q=(\lambda_1,\lambda_2\ldots,\lambda_m) \in \Lambda_J^m$ of length $m$.}
\end{figure}

The assumptions satisfied by the underlying scattering wavelets allow one to show that $S_J[\cP_J]$ has the desired properties from a feature map with values in $\cH=\ell^2(\cP_J;L^2(\rd))$ as discussed in the introduction.

\noindent \textit{Lipschitz regularity}. It is proved in \cite[Proposition 2.5]{mall cpam} that $S_J[\cP_J] \colon L^2(\rd) \to \cH$ is a nonexpansive transform, namely
\[ \adm{S_J[\cP_J]f - S_J[\cP_J]h} \le \|f-h\|_{L^2}, \quad f,h\in L^2(\rd).\]

\noindent \textit{Norm preservation}. Provided that the filters satisfy additional admissibility conditions (see \cite[Theorem 2.6]{mall cpam} or \cite[Theorem 3.1]{Waldspurger} in dimension $d=1$), $S_J[\cP_J]$ preserves the norm of the input signal:
\[ \| f \|_{L^2}^2 = \|S_J[\cP_J]f\|^2 = \sum_{p \in \cP_J} \|S_J[p]f\|_{L^2}^2, \quad f \in L^2(\rd).\] 

\noindent \textit{Translation invariance}. It is proved in \cite[Proposition 2.9]{mall cpam} that the scattering distance $\adm{S_J[\cP_J]f-S_J[\cP_J]h}$ is nonincreasing when $J$ increases, and
the scattering metric is asymptotically translation invariant, as proved in \cite[Theorem 2.10]{mall cpam}:\[ \lim_{J \to +\infty} \adm{S_J[\cP_J](T_xf)-S_J[\cP_J](f)} = 0, \quad \forall x \in \rd, \, f \in L^2(\rd). \]

\noindent \textit{Stability to small deformations}. As already anticipated in the Introduction, the stability bound \eqref{eq mallat0} is proved in \cite[Theorem 2.12]{mall cpam} for functions $f$ such that
\[ \|U[\cP_J]f \|_1 = \sum_{m\ge 0} \| U[\Lambda_J^m]f \|<\infty.
\]

Let us discuss some additional properties of the scattering transform which are used below. 

\noindent \textit{Covariance properties}. The joint action of scaling and rotation by $2^l g \in 2^\bZ \times G$ on a signal $f$ is given by $(2^l g \circ f)(x) \coloneqq  f(2^l g x)$, $x \in \rd$, while for a path $p = (\lambda_1, \ldots, \lambda_m) \in \cP_\infty$ of length $m$ we set $2^lgp \coloneqq (2^lg \lambda_1, \ldots, 2^lg \lambda_m)$. It is not difficult to show that the one-step propagator is somehow covariant to scaling and rotations, namely $U[\lambda](2^lg \circ f) = 2^l g \circ U[2^{-l}g\lambda]f$, $\lambda\in\Lambda_\infty$. In view of the cascading structure of the scattering transform, this property reflects into
\begin{equation}\label{eq mall 2.18}
    U[p](2^lg \circ f) = 2^l g \circ U[2^{-l}gp]f, \quad p \in \cP_\infty, 
\end{equation} and
\begin{equation}\label{eq cov scatt}
    S_J[p](2^lg \circ f) = 2^l g \circ S_{J+l}[2^{-l}gp]f, \quad p \in \cP_J. 
\end{equation}

\noindent \textit{Additivity on separated signals}. The following simple result shows that $S_J[\cP_J]$ is additive on functions that are separated in the wavelet domain. 
\begin{lemma}\label{lemma sep}
Let $f,g\in L^2(\rd)$ be such that, for every $\lambda\in\Lambda_J$,
\[
f\ast \psi_\lambda=0\quad{\it or}\quad  g\ast \psi_\lambda=0.
\]
Then 
\[
S_J[\cP_J](f+g)=S_J[\cP_J](f)+S_J[\cP_J](g). 
\]
\end{lemma}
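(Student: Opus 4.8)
The plan is to reduce the statement to an additivity property for the one‑path propagators $U[p]$, $p\in\cP_J$, and then recover the claim for $S_J[\cP_J]$ by applying the averaging operator $A_J=(\,\cdot\,)\ast\phi_{2^J}$, which is linear. Concretely, it suffices to prove that $U[p](f+g)=U[p]f+U[p]g$ for every $p\in\cP_J$; applying $A_J$ to both sides then yields $S_J[p](f+g)=S_J[p]f+S_J[p]g$ for every $p$, which is exactly the asserted identity in $\ell^2(\cP_J;L^2(\rd))$. For the empty path there is nothing to do, since $U[\emptyset]$ is the identity and convolution with $\phi_{2^J}$ is linear.

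So fix a path $p=(\lambda_1,\dots,\lambda_m)\in\Lambda_J^m$ with $m\ge1$. Since $\lambda_1\in\Lambda_J$, the hypothesis forces one of $f\ast\psi_{\lambda_1}$, $g\ast\psi_{\lambda_1}$ to vanish identically; by symmetry we may assume $g\ast\psi_{\lambda_1}=0$. Two elementary remarks then finish the proof. First, for every $\lambda\in\Lambda_\infty$ one has $U[\lambda]0=|0\ast\psi_\lambda|=0$, so $U[q]$ sends the zero function to the zero function for every path $q$; since $U[\lambda_1]g=|g\ast\psi_{\lambda_1}|=0$, iterating the remaining layers gives $U[p]g=U[\lambda_m]\cdots U[\lambda_2]\big(U[\lambda_1]g\big)=0$. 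Second, $U[\lambda_1](f+g)=\big|\,f\ast\psi_{\lambda_1}+g\ast\psi_{\lambda_1}\,\big|=\big|\,f\ast\psi_{\lambda_1}\,\big|=U[\lambda_1]f$. Feeding this last identity into the cascade $U[\lambda_m]\cdots U[\lambda_2]$ yields $U[p](f+g)=U[\lambda_m]\cdots U[\lambda_2]\big(U[\lambda_1]f\big)=U[p]f$, and combining with $U[p]g=0$ we obtain $U[p](f+g)=U[p]f+U[p]g$, as desired.

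The only point that needs a little care — and it is the closest thing to an obstacle here — is the bookkeeping of the degenerate situations: when both $f\ast\psi_{\lambda_1}$ and $g\ast\psi_{\lambda_1}$ vanish, all three of $U[p](f+g)$, $U[p]f$, $U[p]g$ are identically zero and the identity is trivial; and one should keep in mind that the modulus nonlinearity is harmless precisely because the hypothesis annihilates, already at the first layer, one of the two wavelet coefficients, after which the zero propagates deterministically down the whole path. Once this is observed, the argument is a short computation with no induction and no estimates.
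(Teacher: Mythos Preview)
Your proof is correct and follows essentially the same approach as the paper's: reduce to additivity of $U[p]$ via linearity of $A_J$, then use the hypothesis at the first layer $\lambda_1$ to conclude that one of $U[\lambda_1]f$, $U[\lambda_1]g$ vanishes and propagate through the remaining cascade. The paper presents the same argument more tersely, but there is no substantive difference.
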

\begin{proof}
Since the convolution with $\phi_{2^J}$ is a linear operator, it suffices to prove that 
\[
U[p](f+g)=U[p]f+U[p]g, \quad p \in \cP_J.
\]
Consider then $p=(\lambda_1,\ldots,\lambda_m)\in\Lambda_J^m$. Since 
\[
U[\lambda_1,\ldots,\lambda_m]=U[\lambda_2,\ldots,\lambda_{m}]U[\lambda_1],
\]  
it is enough to show that, for every $\lambda\in\Lambda_J$,
$U[\lambda](f+g)=U[\lambda]f+U[\lambda]g$ \textit{and} one of the two terms on the right-hand side vanishes. The claim follows at once from the assumption and the definition of the one-step propagator $U[\lambda]f=|f\ast\psi_\lambda|$.
\end{proof}
We finally present the following embeddings in dimension $d=1$, obtained by means of the scattering decay results proved in \cite{Waldspurger}. The formula \eqref{eq embell2} is essentially known, cf.\ \cite[Lemma 2.8]{mall cpam} -- the latter was proved in arbitrary dimension under a more restrictive admissibility condition on the wavelet $\psi$ (see \cite[Theorem 2.6]{mall cpam}). The estimate \eqref{eq embell1} seems new. 

\begin{proposition}\label{pro embed}
Let $\psi \in L^1(\bR) \cap L^2(\bR)$ satisfy the following Littlewood-Paley inequality, for every $\omega\in\bR$: \[
\frac{1}{2}\sum_{j\in\bZ} \left[ |\widehat{\psi}(2^{-j}\omega)|^2+ |\widehat{\psi}(-2^{-j}\omega)|^2 \right]\leq  1.
\]

\noindent Moreover, assume that 
\[
|\widehat{\psi}(-2^{-j}\omega)|\leq |\widehat{\psi}(2^{-j}\omega)|
\]
for $\omega>0$ and $j\in\bZ$, provided that for every $\omega$ the condition holds with strict inequality for at least one value of $j$.

\noindent Finally, assume that $|\widehat{\psi}(\omega)|=O(|\omega|^{1+\varepsilon})$ for some $\varepsilon>0$, as $\omega\to0$. 

There exists $C>0$ such that, for every $J\in\bZ$,
\begin{equation}\label{eq embell2}
\|U[\cP_J]f\|^2\leq C\int_{\bR}|\widehat{f}(\omega)|^2\log(e+2^J|\omega|)\, d\omega.
\end{equation}
Moreover, for every $\beta>2$ there exists $C>0$ such that, for every $J\in\bZ$,
\begin{equation}\label{eq embell1}
\|U[\cP_J]f\|_1
\leq  C\Big(\int_{\bR}|\widehat{f}(\omega)|^2\log^\beta(e+2^J|\omega|)\, d\omega\Big)^{1/2}.
\end{equation}
\end{proposition}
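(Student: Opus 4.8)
The plan is to derive both bounds from the exponential decay of the scattering energy established in \cite{Waldspurger}, by a dyadic summation against the logarithmic weight.

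\textbf{Reduction and the engine.} By the scaling covariance \eqref{eq mall 2.18} (here $d=1$), replacing $f$ by $f(2^{J}\cdot)$ turns $U[\cP_J]$ into $U[\cP_0]$ and $\log(e+2^{J}|\omega|)$ into $\log(e+|\omega|)$, up to Jacobian factors which cancel between the two sides; hence we may assume $J=0$, so that $\Lambda_0=\{2^{j}:j\ge 1\}$. The main input is the following consequence of the decay estimates of \cite{Waldspurger}, valid under the stated admissibility and vanishing-order hypotheses on $\psi$: there are $\rho\in(0,1)$ and $C_0>0$ such that, for every integer $N\ge 0$ and every $g\in L^2(\bR)$ with $\widehat g$ supported in the dyadic shell $\{2^{N-1}\le|\omega|\le 2^{N+1}\}$,
\[
\|U[\Lambda_0^{m}]g\|_{L^2}^2\le C_0\,\rho^{\,(m-N)_+}\,\|g\|_{L^2}^2,\qquad m\ge 0,
\]
where $(x)_+=\max\{x,0\}$. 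For $m\le N$ this is merely the monotonicity $\|U[\Lambda_0^{m}]g\|\le\|U[\Lambda_0^{m-1}]g\|\le\|g\|_{L^2}$, immediate from the Littlewood--Paley inequality; for $m>N$ it is the genuine content of the decay estimates.

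\textbf{From one shell to a general $f$.} Decompose $f\in L^2(\bR)$ into dyadic frequency shells, $f=\sum_{N\ge 0}g_N$, so that $\int_{\bR}|\widehat f(\omega)|^{2}\log^{\gamma}(e+|\omega|)\,d\omega\approx\sum_{N\ge 0}(1+N)^{\gamma}\|g_N\|_{L^2}^{2}$ for every $\gamma\ge 0$. Since $U$ is not additive in its argument, one cannot simply add the per-shell bounds; instead one peels off the first wavelet layer. For $\lambda_1=2^{j_1}\in\Lambda_0$ one has $U[\lambda_1]f=|f\ast\psi_{\lambda_1}|$, and $f\ast\psi_{\lambda_1}$ is governed by the shells of $f$ near octave $j_1$, the far shells contributing little --- quantitatively at low frequencies because $\widehat\psi$ vanishes to order $1+\varepsilon$ at the origin, and at high frequencies because $\psi\in L^{1}$, together with the Littlewood--Paley inequality, forces $\widehat\psi$ to decay. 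Combining this with $\sum_{p\in\Lambda_0^{m-1}}\|U[p]U[\lambda_1]h\|^{2}\le\|U[\Lambda_0^{m}]h\|^{2}$, the nonexpansiveness of $U$ (to absorb the far-shell errors), and the shell-by-shell decay estimate, one is left with the geometric series $\sum_{m\ge 0}(1+m)^{a}\rho^{\,(m-N)_+}\lesssim_a(1+N)^{a+1}$, which yields, for every $a\ge 0$,
\[
\sum_{m\ge 0}(1+m)^{a}\,\|U[\Lambda_0^{m}]f\|_{L^2}^{2}\lesssim_a\sum_{N\ge 0}(1+N)^{a+1}\,\|g_N\|_{L^2}^{2}\approx\int_{\bR}|\widehat f(\omega)|^{2}\log^{a+1}(e+|\omega|)\,d\omega .
\]
The case $a=0$ is exactly \eqref{eq embell2}.

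\textbf{The $\ell^{1}$ estimate.} Taking instead $a=\beta-1$, with $\beta>2$ so that $\sum_{m\ge 0}(1+m)^{-a}<\infty$, the Cauchy--Schwarz inequality gives
\[
\|U[\cP_0]f\|_1=\sum_{m\ge 0}\|U[\Lambda_0^{m}]f\|_{L^2}\le\Big(\sum_{m\ge 0}(1+m)^{-a}\Big)^{1/2}\Big(\sum_{m\ge 0}(1+m)^{a}\|U[\Lambda_0^{m}]f\|_{L^2}^{2}\Big)^{1/2}\lesssim_\beta\Big(\int_{\bR}|\widehat f(\omega)|^{2}\log^{\beta}(e+|\omega|)\,d\omega\Big)^{1/2},
\]
which is \eqref{eq embell1}; undoing the reduction to $J=0$ recovers the general statement.

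\textbf{Main obstacle.} The delicate step is the passage from the single-shell decay of \cite{Waldspurger} to the estimate for a general superposition $f=\sum_N g_N$: keeping track of how the modulus nonlinearity at each layer spreads energy across scales, and bounding the errors generated by the first-layer peeling. This is, essentially, the mechanism behind \cite[Lemma 2.8]{mall cpam}, now carried out under the weaker admissibility condition of \cite{Waldspurger}; the remaining ingredients (the dyadic decomposition, the geometric series, and the Cauchy--Schwarz step) are routine.
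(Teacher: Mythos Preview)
The central gap is in your ``engine'' and in the passage from a single dyadic shell to a general $f$. Waldspurger's decay result, as used in the paper, is \emph{not} stated for shell-localized inputs but already for arbitrary $f\in L^2(\bR)$: for $m\geq 2$,
\[
\|U[\Lambda_0^m]f\|^2\leq \frac{1}{2\pi}\int_{\bR}|\widehat{f}(\omega)|^2 A_m(\omega)\,d\omega,\qquad A_m(\omega)=1-e^{-2\omega^2/(ra^m)^2}.
\]
With this in hand the proof is essentially a one-line computation: one verifies the pointwise bound $\sum_{m\geq 2}A_m(\omega)\lesssim \log(e+|\omega|)$ by splitting the sum at $m\approx\log_a|\omega|$, which gives \eqref{eq embell2}; for \eqref{eq embell1} one checks $A_m(\omega)/\log^\beta(e+|\omega|)\lesssim m^{-\beta}$ pointwise. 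No dyadic decomposition of $f$, no peeling, and no confrontation with the nonlinearity are needed.

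By contrast, your route manufactures a genuine difficulty. After one layer, $U[\lambda_1]f=|f*\psi_{\lambda_1}|$ is no longer shell-localized in frequency (the modulus spreads the spectrum), so you cannot feed it back into a shell-localized engine; peeling one layer therefore does not reduce the problem to the case you can handle. Your sketch also needs quantitative decay of $\widehat\psi$ at high frequencies to control the ``far-shell'' errors, which is not available under the stated hypotheses ($\psi\in L^1\cap L^2$ and the Littlewood--Paley inequality give no pointwise rate as $|\omega|\to\infty$). In effect, the step you flag as the ``main obstacle'' is precisely the content of Waldspurger's theorem itself --- it is what produces the Fourier-side weight $A_m(\omega)$ for general $f$ --- and your argument does not reproduce it.

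One positive remark: your Cauchy--Schwarz route to \eqref{eq embell1}, via the weighted estimate $\sum_m (1+m)^{\beta-1}\|U[\Lambda_0^m]f\|^2\lesssim\int|\widehat f|^2\log^\beta(e+|\omega|)\,d\omega$, is a perfectly valid alternative to the paper's pointwise bound, and the weighted $\ell^2$ estimate \emph{does} follow from the correct engine (one checks $\sum_m (1+m)^{\beta-1}A_m(\omega)\lesssim \log^\beta(e+|\omega|)$ by the same splitting). So the deduction of the $\ell^1$ bound from a suitably weighted $\ell^2$ bound is sound; what fails is your derivation of the latter.
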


\begin{proof}[Proof of Proposition \ref{pro embed}]
First of all, we note that it suffices to prove the estimates \eqref{eq embell2} and \eqref{eq embell1} for $J=0$. The claim then follows by a scaling argument. More precisely, consider $f_J(x)\coloneqq 2^{J/2}f(2^{J}x)$ and note that, by \eqref{eq mall 2.18}, we have $\|U[\Lambda_0^m]f_J\|=\|U[\Lambda_J^m]f\|$ and therefore $\|U[\cP_0]f_J\|=\|U[\cP_J]f\|$ as well. 

By \cite[Theorem 3.1]{Waldspurger}, the assumptions in the statement imply that, for $m\geq 2$,
\begin{equation}\label{eq wald}
    \|U[\Lambda^m_0]f\|^2\leq \frac{1}{2\pi}\int_{\bR}|\widehat{f}(\omega)|^2 A_m(\omega)\,d\omega,
\end{equation}
where\footnote{The factor $1/(2\pi)$ in \eqref{eq wald} does not appear in \cite[Theorem 3.1]{Waldspurger} because of a different normalization of the Fourier transform. The constant $r$ below will be different from that in \cite[Theorem 3.1]{Waldspurger} as well, for the same reason.} $A_m(\omega)=1-e^{-2\omega^2/(r a^{m})^2}$, for suitable $a>1$, $r>0$. 

Since $\|U[\cP_0]f\|^2=\sum_{m=0}^\infty \|U[\Lambda_0^m]f\|^2$, and $ \|U[\emptyset]f\|^2+\|U[\Lambda_0]f\|^2\leq 2\|f\|^2_{L^2}$ ($U[\Lambda_0]$ being nonexpansive), in order to obtain \eqref{eq embell2} it is enough to verify that 
\begin{equation}\label{eq somma}
\sum_{m=2}^\infty A_m(\omega)\leq C \log(e+|\omega|/r) 
\end{equation}
for some constant $C>0$, possibly depending on $a$ only. 

A straightforward change of variable shows that we can suppose $r=1$ without loss of generality. The estimate \eqref{eq somma} is satisfied if $|\omega|\leq a^2$ because $A_m(\omega)\lesssim (\omega/a^m)^2$. On the other hand, if $|\omega|\geq a^2$ we conveniently split the sum in \eqref{eq somma} in two parts accounting for $m\leq N$ and $m>N$, where $N\geq 2$ is such that $a^N\leq |\omega|<a^{N+1}$. Using that $A_m(\omega)\leq 1$ and $A_m(\omega)\lesssim (\omega/a^m)^2\lesssim a^{2(N-m)}$ in the two regimes, respectively, we obtain 
\[
\sum_{m=2}^\infty A_m(\omega)\lesssim \sum_{2\leq m\leq N} 1+\sum_{m>N} a^{2(N-m)}\lesssim N\leq \log_a|\omega|,
\]
which gives \eqref{eq somma}.

Let us now prove \eqref{eq embell1} with $J=0$ in light of the previous arguments. By \eqref{eq wald} we see that it is sufficient to prove the bound
\begin{equation}\label{eq somma2}
\sum_{m=2}^\infty \Big(\int_{\bR}|\widehat{f}(\omega)|^2 A_m(\omega)\,d\omega\Big)^{1/2}\lesssim \Big(\int_{\bR}|\widehat{f}(\omega)|^2\log^\beta(e+|\omega|)\, d\omega\Big)^{1/2}.
\end{equation}
Since $\beta>2$, the latter will follow from the pointwise bound 
\[
\frac{A_m(\omega)}{\log^\beta(e+|\omega|)}\lesssim \frac{1}{m^\beta}.
\]
This estimate clearly holds if $|\omega|\geq a^{m/2}$, since $A_m(\omega)\leq 1$. If $|\omega|< a^{m/2}$ we have
\[
\frac{A_m(\omega)}{\log^\beta(e+|\omega|)}\leq A_m(\omega) \lesssim
\frac{\omega^2}{a^{2m}}\leq  \frac{1}{a^{m}}\lesssim \frac{1}{m^\beta}.
\]
\end{proof}
\begin{remark}
It is worthwhile to point out that \eqref{eq somma2} does not hold for $\beta=2$, as evidenced by the following example.  Consider a function $f\in L^2(\bR)$ whose Fourier transform is supported in $[ra^2,+\infty)=\cup_{k\geq 2}\Omega_k$, with $\Omega_k=[ra^k,ra^{k+1})$, and takes a constant value on each $\Omega_k$, adjusted so that  $\int_{\Omega_k} |\widehat{f}(\omega)|^2\log^2(e+|\omega|)\, d\omega=1/(k\log^2 k)$. We then have \[\int_{\bR} |\widehat{f}(\omega)|^2\log^2(e+|\omega|)\, d\omega<\infty.\] On the other hand, if $k\geq m$, on $\Omega_k$ we have $A_m(\omega)/\log^2(e+|\omega|)\gtrsim 1/\log^2(e+|\omega|)\gtrsim 1/k^2$, so that 
\begin{align*}
\sum_{m=2}^\infty \Big(\int_{\bR}|\widehat{f}(\omega)|^2 A_m(\omega)\,d\omega\Big)^{1/2}&\geq  \sum_{m=2}^\infty\Big(\sum_{k\geq m} \int_{\Omega_k} |\widehat{f}(\omega)|^2A_m(\omega)\, d\omega\Big)^{1/2}\\
&\gtrsim \sum_{m=2}^\infty\Big(\sum_{k\geq m} \frac{1}{k^3\log^2 k}\Big)^{1/2}=\infty.
\end{align*}
Indeed, the latter series is readily seen to diverge, since
\[
\sum_{k\geq m} \frac{1}{k^3\log^2 k}\geq \int_m^{+\infty}\frac{1}{x^3 \log^2 x}\,dx\gtrsim \frac{1}{m^2\log^2 m},
\]
where we resorted to integration by parts in the last step. 
\end{remark}

\section{Instability results for $C^\alpha$ regularity, $0\leq \alpha<1$} 
This section is devoted to the instability phenomenon occurring for deformations with regularity $C^\alpha$, $0\leq \alpha<1$, already illustrated in the Introduction. We begin with the proof of Theorem \ref{thm mainthm0}. 
\begin{proof}[Proof of Theorem \ref{thm mainthm0}] Let us begin with the proof of \eqref{eq cont 0}. First of all, under the stated assumptions on $\phi,\psi$ we infer from \cite[Theorem 3.1]{Waldspurger}\footnote{The assumptions of Theorem \cite[Theorem 3.1]{Waldspurger} are the same as those of Proposition \ref{pro embed} and are therefore satisfied here. Indeed, if \eqref{eq LP} holds for the specified $J$, by rescaling one sees that it holds for every $J\in\bZ$. Letting $J\to-\infty$, since $\widehat{\phi}$ tends to $0$ at infinity we see that 
\[
\frac{1}{2}\sum_{j\in\bZ} \left[ |\widehat{\psi}(2^{-j}\omega)|^2+ |\widehat{\psi}(-2^{-j}\omega)|^2 \right]=1.
\]
} that 
\[
\lim_{m\to\infty}\|U[\Lambda^m_J]f\|=0
\]
for every $f\in L^2(\bR)$. In view of the Littlewood-Paley condition \eqref{eq LP}, the latter result implies that $S_J[\cP_J]$ preserves the norm (cf.\ the proof of \cite[Theorem 2.6]{mall cpam}).

Consider a compactly supported $f\in C^\infty(\bR;\bR)$ with $f(x)=x$ for $0 \le x \le 2\pi$. Let $\varphi\in C^\infty(\bR;\bR)$, supported in $[0,2\pi]$, be such that $0<\varphi(x)\leq 1$ for $0<x<2\pi$. 

Consider the deformation function defined by
\[
\tau(x)=-\frac{A}{N} \sin(N x)\varphi(x),
\]
where $N\in\mathbb{N}$, $N\geq 1$, will be chosen later (large enough) and $A>0$ is fixed in such a way that $A(1+\|\varphi'\|_{L^\infty})\le 1/2$, hence $\|\tau'\|_{L^\infty}\le 1/2$ for every $N$. 

For future reference, we remark that
\begin{equation}\label{eq cont 4}
x-\tau(x)\in [0,2\pi]\qquad {\rm for}\ x\in [0,2\pi].
\end{equation}
This follows from the fact that, since $\|\tau\|_{L^\infty}\leq AN^{-1}\leq \pi N^{-1}$, if $x$ belongs to one of the $2N$ subintervals of $[0,2\pi]$ where $\tau$ has constant sign then $x-\tau(x)$ belongs either to the same interval or to an adjacent one.   

We finally set $f_n(x)=2^{n/2}f(2^n x)$ and $\tau_n(x)=2^{-n} \tau(2^n x)$, $n\in\mathbb{N}$, as in the statement. 

A convenient facilitation results from the fact that it suffices to prove the desired estimate \eqref{eq cont 0} for $n=0$, with a constant $C_0$ independent of $J\in\bZ$. This can be readily inferred from the scaling property \eqref{eq cov scatt}, yielding
\begin{equation}\label{eq scal}
\adm{S_J[\cP_J] (L_{\tau_n}f_n)- S_J[\cP_J] (f_n)}=\adm{S_{J+n}[\cP_{J+n}](L_{\tau}f)- S_{J+n}[\cP_{J+n}](f)}.
\end{equation}
 
Let us thus set $n=0$ hereafter. We are going to prove that, for $N\in\bN$ large enough,
\begin{equation}\label{eq cont 14}
\adm{S_J[\cP_J](L_\tau f)-S_J[\cP_J](f)}\gtrsim \frac{1}{N}
\end{equation}
where the hidden constants in the symbols $\gtrsim$, $\lesssim$ and  $\approx$ are always independent of $J\in\bZ$ and $N\in\bN$. 

Let us first discuss the strategy. The function $f$ is concentrated in the frequency region where $|\omega|\lesssim 1$, while the function $L_\tau f -f$ will be shown to be concentrated where $|\omega-N|\lesssim 1$ or $|\omega+N|\lesssim 1$. Therefore, if $N$ is sufficiently large we have that $S_J[\cP_J](L_\tau f)$ approximately coincides with $S_J[\cP_J](L_\tau f-f)+S_J[\cP_J](f)$ by virtue of Lemma \ref{lemma sep}, hence
\[
\adm{S_J[\cP_J](L_\tau f)-S_J[\cP_J](f)}\approx \adm{S_J[\cP_J](L_\tau f-f)}\gtrsim \|L_\tau f-f\|_{L^2}. 
\]
Making a rigorous argument out of this clue necessarily comes through suitable bounds for the frequency tails of the functions $f$ and $g\coloneqq L_\tau f -f$. To this aim, let us start by noting that $g =-\tau$ as a consequence of \eqref{eq cont 4} and the fact that $f(x)=x$ for $x\in [0,2\pi]$ by design. Then  
\begin{equation}\label{eq hatg}
\widehat{g}(\omega)= i\frac{A}{2N}(\widehat{\varphi}(\omega-N)-\widehat{\varphi}(\omega+N)). 
\end{equation}
For $j,j'\in\bZ$ we write $f_{\leq 2^j}$, $f_{\geq 2^{j'}}$, $f_{2^j\leq \cdot\leq 2^{j'}}$, for the projections of $f$ on the subspace of $L^2(\bR)$ whose Fourier transform in supported in $|\omega|\leq 2^j$, $|\omega|\geq 2^{j'}$ and $2^j\leq |\omega|\leq 2^{j'}$ respectively, and similarly for the function $g$.

By assumption, $\widehat{\psi}$ is compactly supported in $(0,+\infty)$, hence there exists $k\in\bZ$ such that
\[
{\rm supp}\,\widehat{\psi}\subset [2^{-k},2^k].
\]
As a result, we have that 
\begin{equation}\label{eq cont 1}
{\rm supp}\,\widehat{\psi}_\lambda\subset [2^{j-k},2^{j+k}],\qquad \lambda=2^j\in 2^\bZ.
\end{equation}
Let then $j\in\bZ$ be such that $2^{j+k+1}<N\leq 2^{j+k+2}$. 
By \eqref{eq hatg} we have
\begin{align}\label{eq cont 6} 
\|g_{\leq 2^{j+k}}\|_{L^2}^2&=\frac{1}{2\pi}\int_{[-2^{j+k},2^{j+k}]} |\widehat{g}(\omega)|^2\, d\omega\\
&=\frac{A^2}{8\pi N^2}\int_{[-2^{j+k},2^{j+k}]} |\widehat{\varphi}(\omega-N)-\widehat{\varphi}(\omega+N)|^2\, d\omega\nonumber\\
&\leq \frac{A^2}{4\pi N^2}\int_{[-2^{j+k},2^{j+k}]} (|\widehat{\varphi}(\omega-N)|^2+|\widehat{\varphi}(\omega+N)|^2)\, d\omega\nonumber\\
&\leq \frac{A^2}{4\pi N^2}\int_{\bR\setminus[-(N-2^{j+k}),N-2^{j+k}]} |\widehat{\varphi}(\omega)|^2 d\omega\nonumber\\
&\lesssim_\beta \frac{1}{N^\beta}\nonumber
\end{align}
for every $\beta>0$, because $N-2^{j+k}>N/2$ and $\widehat{\varphi}$ is a rapidly decreasing function. On the other hand,
\begin{equation}\label{eq cont 15}
\|g\|_{L^2}^2=\|\tau\|_{L^2}^2=\frac{ A^2}{ N^2}\int_{[0,2\pi]} \sin^2(Nx) \varphi(x)^2\, dx \approx \frac{1}{N^2}
\end{equation}
for $N$ large enough, because the latter integral converges to $\|\varphi\|^2_{L^2}/2$ as $N\to\infty$ due to the Riemann-Lebesgue lemma.

Therefore, we have obtained, for $N$ large enough,  \begin{equation}\label{eq cont 7}
\|g_{\geq 2^{j+k}}\|_{L^2}^2 = \|g\|_{L^2}^2-\|g_{\leq 2^{j+k}}\|_{L^2}^2 \gtrsim \frac{1}{N^2}. 
\end{equation}
Furthermore, 
\begin{equation}\label{eq cont 8}
\|f_{\geq 2^{j-k}}\|_{L^2}^2=\frac{1}{2\pi}\int_{\bR\setminus[-2^{j-k},2^{j-k}]}|\widehat{f}(\omega)|^2\, d\omega\lesssim \frac{1}{N^\beta}
\end{equation}
for every $\beta>0$, because $2^{j-k}\geq 2^{-2k-2}N$ and $\widehat{f}$ has rapid decay. 

Note that $f=\fm+\fii+\fp$ by construction, and a similar decomposition holds for $g$. Since $S_J$ is nonexpansive, the triangle inequality allows us to write
\begin{align}\label{eq cont 18}
\adm{S_J(L_\tau f)-S_J(f)}\geq & \adm{S_J(\fm+\fp+\gm+\gp)-S_J(\fm+\fp)}\\
&-2\| \fii\|_{L^2}-\|\gii\|_{L^2}. \nonumber
\end{align}
We stress that the last two terms are $O(N^{-\beta})$ for every $\beta>0$ by \eqref{eq cont 6} and \eqref{eq cont 8}. 

On the other hand, in view of \eqref{eq cont 1} we have that the functions $\fm+\gm$ and $\fp+\gp$ are separated in the wavelet domain, in the sense of Lemma \ref{lemma sep}. Therefore,
\[
S_J[\cP_J](\fm+\gm+\fp+\gp)= S_J[\cP_J](\fm+\gm)+S_J[\cP_J](\fp+\gp)
\]
and similarly, 
\[
S_J[\cP_J](\fm+\fp)= S_J[\cP_J](\fm)+S_J[\cP_J](\fp).
\]
To conclude, since $S_J[\cP_J]$ is norm preserving as clarified at the beginning of the proof, we have 
\begin{align*}
&\adm{S_J[\cP_J](\fm+\gm+\fp+\gp)-S_J[\cP_J](\fm+\fp)}\\
&\qquad\qquad\qquad\geq \|\gp\|_{L^2}-\|\gm\|_{L^2}-2\|\fp\|_{L^2}\\
&\qquad\qquad\qquad\gtrsim \frac{1}{N}
\end{align*}
for $N$ large enough, by \eqref{eq cont 6}, \eqref{eq cont 7} and \eqref{eq cont 8}.

Combining the last bound with \eqref{eq cont 18} finally gives \eqref{eq cont 14}, provided that $N$ is large enough. This concludes the proof of \eqref{eq cont 0}.  
The proof of \eqref{eq cons0} turns out to be an immediate consequence of the validity of \eqref{eq cont 0}, because  $\|f_n\|_{L^2}=\|f\|_{L^2}$ for every $n$ and, for $0\leq \alpha<1$, $\|\tau_n\|_{C^\alpha}\lesssim 2^{-n(1-\alpha)}$ as a result of \eqref{eq interp}.
\end{proof}

The following result provides lower bounds in the same spirit of \eqref{eq cons0} but for scattering norms instead of the $L^2$-norm. The factors $\max\{J'+n,1\}^{1/2}$ and $\max\{J'+n,1\}^{\beta}$
occurring in \eqref{eq cont 23} and \eqref{eq cont 23 bis} below essentially counteract the growth of the norms $\|U[\cP_{J'}]f_n\|$ and $\|U[\cP_{J'}]f_n\|_1$, respectively (cf.\ Proposition
\ref{pro embed}).

\begin{proposition}\label{pro pro0}
Under the same assumption (and notation) as in Theorem \ref{thm mainthm0} we have the following lower bounds. 

There exists $C>0$ such that for every $J,J'\in\bZ$, $n\in\bN$,
\begin{equation}\label{eq cont 23}
\|S_J[\cP_J](L_{\tau_n} f_n)-S_J[\cP_J](f_n)\|\geq  \frac{C 2^{n(1-\alpha)}}{\max\{J'+n,1\}^{1/2}}\|\tau_n\|_{C^\alpha}\|U[\cP_{J'}]f_n\|.
\end{equation}

Moreover, for every $\beta>1$ there exists $C>0$ such that, for every $J,J'\in\bZ$, $n\in\bN$,
\begin{equation}\label{eq cont 23 bis}
\|S_J[\cP_J](L_{\tau_n} f_n)-S_J[\cP_J](f_n)\|\geq  \frac{C 2^{n(1-\alpha)}}{\max\{J'+n,1\}^{\beta}}\|\tau_n\|_{C^\alpha}\|U[\cP_{J'}]f_n\|_1.
\end{equation}
\end{proposition}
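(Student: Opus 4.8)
The plan is to deduce Proposition \ref{pro pro0} from Theorem \ref{thm mainthm0} by bounding, uniformly in the parameters, how large the scattering norms $\|U[\cP_{J'}]f_n\|$ and $\|U[\cP_{J'}]f_n\|_1$ can be. By \eqref{eq cont 0} we already have $\|S_J[\cP_J](L_{\tau_n}f_n)-S_J[\cP_J](f_n)\|\geq C_0$ with $C_0>0$ independent of $J,n$, and $2^{n(1-\alpha)}\|\tau_n\|_{C^\alpha}\lesssim 1$ by the interpolation inequality \eqref{eq interp} (as recorded at the end of the proof of Theorem \ref{thm mainthm0}). Hence it suffices to establish the two uniform estimates
\[
\|U[\cP_{J'}]f_n\|\lesssim \max\{J'+n,1\}^{1/2},\qquad \|U[\cP_{J'}]f_n\|_1\lesssim_\beta \max\{J'+n,1\}^{\beta}\ \ (\beta>1),
\]
with implicit constants independent of $J'\in\bZ$ and $n\in\bN$; then \eqref{eq cont 23} and \eqref{eq cont 23 bis} follow by choosing the constant $C$ in their right-hand sides small enough that those right-hand sides do not exceed $C_0$.

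The first step is a scaling reduction to the fixed, non-rescaled profile $f$ used in the proof of Theorem \ref{thm mainthm0}. Writing $f_n=2^{n/2}(2^n\circ f)$ and using the covariance identity \eqref{eq mall 2.18} with the rotation taken to be the identity, $U[p](2^n\circ f)=2^n\circ U[2^{-n}p]f$, together with $\|2^n\circ h\|_{L^2}=2^{-n/2}\|h\|_{L^2}$, one gets $\|U[p]f_n\|_{L^2}=\|U[2^{-n}p]f\|_{L^2}$ for every finite path $p$. Since $p\mapsto 2^{-n}p$ is a bijection of $\Lambda_{J'}^m$ onto $\Lambda_{J'+n}^m$ for each $m$, summing over paths yields $\|U[\Lambda_{J'}^m]f_n\|=\|U[\Lambda_{J'+n}^m]f\|$ for all $m$, hence $\|U[\cP_{J'}]f_n\|=\|U[\cP_{J'+n}]f\|$ and $\|U[\cP_{J'}]f_n\|_1=\|U[\cP_{J'+n}]f\|_1$. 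Setting $K\coloneqq J'+n$, it remains to bound $\|U[\cP_K]f\|$ and $\|U[\cP_K]f\|_1$ for the fixed smooth, compactly supported $f$.

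Here I would invoke Proposition \ref{pro embed}, whose hypotheses are satisfied under the assumptions of Theorem \ref{thm mainthm0} (as noted there: the support of $\widehat\psi$ in $(0,+\infty)$ forces $\widehat\psi(-2^{-j}\omega)=0$ for $\omega>0$ and the vanishing of $\widehat\psi$ near the origin, and the Littlewood-Paley identity is obtained by letting $J\to-\infty$ in \eqref{eq LP}). Combining \eqref{eq embell2} with the elementary bound $\log(e+2^K|\omega|)\leq\log(e+2^K)+\log(e+|\omega|)\lesssim\max\{K,1\}+\log(e+|\omega|)$ and with the rapid decay of $\widehat f$ (so that $\int_\bR|\widehat f(\omega)|^2\log(e+|\omega|)\,d\omega<\infty$) gives $\|U[\cP_K]f\|^2\lesssim\max\{K,1\}$. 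For the $\ell^1$ scattering norm, given $\beta>1$ I would apply \eqref{eq embell1} with exponent $\beta'\coloneqq 2\beta>2$ together with $\log^{\beta'}(e+2^K|\omega|)\lesssim_{\beta'}\max\{K,1\}^{\beta'}+\log^{\beta'}(e+|\omega|)$, obtaining $\|U[\cP_K]f\|_1\lesssim_\beta\max\{K,1\}^{\beta'/2}=\max\{K,1\}^{\beta}$. Inserting these bounds into the reduction above finishes the argument.

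No step presents a genuine obstacle, as the two substantive ingredients — the lower bound in Theorem \ref{thm mainthm0} and the logarithmic embeddings in Proposition \ref{pro embed} — are already in place. The only points to handle carefully are the index bookkeeping in the scaling step (the shift $\Lambda_{J'}^m\leftrightarrow\Lambda_{J'+n}^m$) and the role of the constraint $\beta>1$: it is inherited from the restriction $\beta'>2$ in Proposition \ref{pro embed}, which only allows $\|U[\cP_K]f\|_1\lesssim\max\{K,1\}^{\beta'/2}$ with exponent $\beta'/2>1$, so that the correcting factors $\max\{J'+n,1\}^{\beta}$ in \eqref{eq cont 23 bis} counteract the growth of the scattering norm at exactly the rate permitted by Proposition \ref{pro embed}.
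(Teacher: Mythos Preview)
Your proposal is correct and follows essentially the same route as the paper's proof: both combine the lower bound \eqref{eq cont 0} with the estimate $\|\tau_n\|_{C^\alpha}\lesssim 2^{-n(1-\alpha)}$ and the logarithmic embeddings of Proposition \ref{pro embed} to control the scattering norms of $f_n$. The only cosmetic difference is that the paper applies \eqref{eq embell2}--\eqref{eq embell1} directly to $f_n$ and performs the scaling as a change of variable inside the resulting integral, whereas you first invoke the covariance identity \eqref{eq mall 2.18} to rewrite $\|U[\cP_{J'}]f_n\|=\|U[\cP_{J'+n}]f\|$ and then apply the embeddings to $f$; the two are equivalent.
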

\begin{proof} 
The proof of \eqref{eq cont 23} is carried out along the lines of that of \eqref{eq cons0} in Theorem \ref{thm mainthm0}, now using that \begin{equation}\label{eq cont 20}
\|U[\cP_{J'}]f_n\|\lesssim \max\{J'+n,1\}^{1/2}.
\end{equation}
Indeed, as already observed, under the assumptions of Theorem \ref{thm mainthm0} the hypotheses of Proposition \ref{pro embed} are satisfied as well, so that the latter bound can be inferred from \eqref{eq embell2} and a suitable change of variable: \[
\|U[\cP_{J'}]f_n\|\lesssim\Big(\int_\bR |\widehat{f}(\omega)|^2 \log(e+2^{J'+n}|\omega|)\, d\omega \Big)^{1/2}\lesssim \max\{J'+n,1\}^{1/2},
\]
where we also used that $e+2^{J'+n}|\omega|\leq (e+2^{J'+n})(e+|\omega|)$. 

As far as \eqref{eq cont 23 bis} is concerned, it is just enough to replace \eqref{eq embell2} with \eqref{eq embell1} in the aforementioned arguments.
\end{proof}

\begin{remark}
We emphasize that letting $J'\to-\infty$ in \eqref{eq cont 23}, for fixed $J$ and $n$, yields \eqref{eq cont 0}. Indeed, we have that $\|U[\cP_{J'}]f\|\to \|f\|_{L^2}$ if $f\in L^2(\rd)$ and  $\|U[\cP_{J_0}]f\|<\infty$ for some $J_0\in\bZ$. This follows by an application of the dominated convergence theorem to the map $p\mapsto\|U[p]f\|^2_{L^2}\mathbbm{1}_{\cP_{J'}}(p)$ on the set $\cP_{J_0}$ (the counting measure being understood), since $\mathbbm{1}_{\cP_{J'}}\to \mathbbm{1}_{\{\emptyset\}}$ pointwise.

In addition, note that the quantity $\|S_J[\cP_J](L_\tau f)-S_J[\cP_J](f)\|$, for fixed $f,\tau$, is nonincreasing when $J$ increases -- see Section \ref{sec a brief review} and \cite[Proposition 2.9]{mall cpam} in this connection. On the contrary, both $\|U[\cP_{J'}]f\|$ and $\|U[\cP_{J'}]f\|_1$ are increasing with $J'$.  
These observations show that the results in Theorem \ref{thm mainthm0} and Proposition \ref{pro pro0} are particularly interesting in the regime $J\gg1$.
\end{remark}

We conclude this section by providing a lower bound for the modulus of continuity of the map $\tau\mapsto S_J[\cP_J]L_\tau f$, for signals $f$ with finite scattering norms. 
The proof is omitted, as it ultimately relies on the same arguments already used for proving Proposition \ref{pro pro0}. 

\begin{proposition}
Consider $0\leq\alpha<1$. Under the same assumptions (and notation) of Theorem \ref{thm mainthm0}, there exists a constant $C>0$ such that, for every $J,J'\in\bZ$ and for every $n$ large enough,
\[
\|S_J[\cP_J](L_{\tau_n}f_n)-S_J[\cP_J](f_n)\|\geq C \log^{-1/2}\big(\|\tau_n\|^{-1}_{C^\alpha}\big)\|U[\cP_{J'}]f_n\|.\]
Similarly, for every $\beta>1$ we have 
\[
\|S_J[\cP_J](L_{\tau_n}f_n)-S_J[\cP_J](f_n)\|\geq C  \log^{-\beta}\big(\|\tau_n\|^{-1}_{C^\alpha}\big)\|U[\cP_{J'}]f_n\|_1.
\]
\end{proposition}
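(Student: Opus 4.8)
The plan is to extract the result from the lower bounds already established in Proposition~\ref{pro pro0}, optimizing over the free parameter $J'$ for each fixed $n$. Recall that \eqref{eq cont 23} gives, for every $J'\in\bZ$,
\[
\|S_J[\cP_J](L_{\tau_n}f_n)-S_J[\cP_J](f_n)\|\geq \frac{C\,2^{n(1-\alpha)}}{\max\{J'+n,1\}^{1/2}}\,\|\tau_n\|_{C^\alpha}\,\|U[\cP_{J'}]f_n\|,
\]
and similarly \eqref{eq cont 23 bis} with exponent $\beta$ on the max and the $\ell^1$ scattering norm. The factor $2^{n(1-\alpha)}\|\tau_n\|_{C^\alpha}$ is $\approx 1$ by \eqref{eq interp} (more precisely it is bounded below by a positive constant uniformly in $n$, since $\|\tau_n\|_{C^\alpha}\approx 2^{-n(1-\alpha)}$ for $0\le\alpha<1$), so the key is to choose $J'$ so that the right-hand side is controlled from below by a constant multiple of $\log^{-1/2}(\|\tau_n\|_{C^\alpha}^{-1})\,\|U[\cP_{J'}]f_n\|$.

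First I would observe that $\log(\|\tau_n\|_{C^\alpha}^{-1})\approx n$ for $0\le\alpha<1$ and $n$ large (again by the two-sided bound $\|\tau_n\|_{C^\alpha}\approx 2^{-n(1-\alpha)}$; when $\alpha=0$ one uses $\|\tau_n\|_{L^\infty}\approx 2^{-n}$ directly, noting $\|\tau_n\|_{C^0}=\|\tau_n\|_{L^\infty}$). Then I would make the simplest possible choice $J'=0$ (or any fixed integer independent of $n$), so that $\max\{J'+n,1\}=n$ for $n$ large, and $\max\{J'+n,1\}^{1/2}\approx n^{1/2}\approx \log^{1/2}(\|\tau_n\|_{C^\alpha}^{-1})$. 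Substituting into \eqref{eq cont 23} yields
\[
\|S_J[\cP_J](L_{\tau_n}f_n)-S_J[\cP_J](f_n)\|\gtrsim \frac{1}{\log^{1/2}(\|\tau_n\|_{C^\alpha}^{-1})}\,\|U[\cP_{0}]f_n\|,
\]
which is the first claimed inequality with $J'=0$; for general $J'$ the same computation goes through with $\max\{J'+n,1\}^{1/2}$ in place of $n^{1/2}$, and for $n$ large enough relative to $J'$ one has $\max\{J'+n,1\}^{1/2}\lesssim \log^{1/2}(\|\tau_n\|_{C^\alpha}^{-1})$, giving the stated bound uniformly in $J,J'$. The second inequality is obtained identically, replacing \eqref{eq cont 23} by \eqref{eq cont 23 bis}: the factor $\max\{J'+n,1\}^{\beta}\approx n^\beta\approx \log^\beta(\|\tau_n\|_{C^\alpha}^{-1})$ appears in the denominator, matching the $\log^{-\beta}$ on the right-hand side.

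The only genuine point requiring care — and the step I expect to be the mild obstacle — is making the passage between $n$ and $\log(\|\tau_n\|_{C^\alpha}^{-1})$ fully uniform in $J'$, since one must ensure that $\max\{J'+n,1\}$ is comparable to $n$ (equivalently to $\log(\|\tau_n\|_{C^\alpha}^{-1})$) for the relevant range of $n$; this is exactly why the statement restricts to "$n$ large enough" (depending on $J'$) and why the $\max$ with $1$ was inserted in Proposition~\ref{pro pro0}. Everything else is a direct substitution, so the proof is indeed omitted in the paper, as stated. To write it out one would simply cite \eqref{eq cont 23}--\eqref{eq cont 23 bis}, invoke $\|\tau_n\|_{C^\alpha}\approx 2^{-n(1-\alpha)}$ from \eqref{eq interp}, and absorb constants.
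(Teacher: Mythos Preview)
Your approach is correct and matches the paper's intent: the proof is explicitly omitted there with the remark that it ``relies on the same arguments already used for proving Proposition~\ref{pro pro0}'', which is precisely what you do by invoking \eqref{eq cont 23}--\eqref{eq cont 23 bis} together with $\|\tau_n\|_{C^\alpha}\approx 2^{-n(1-\alpha)}$. One small point: \eqref{eq interp} only gives the \emph{upper} bound $\|\tau_n\|_{C^\alpha}\lesssim 2^{-n(1-\alpha)}$; the matching lower bound (needed for $2^{n(1-\alpha)}\|\tau_n\|_{C^\alpha}\gtrsim 1$) follows instead from the exact scaling identity $|\tau_n|_{C^\alpha}=2^{-n(1-\alpha)}|\tau|_{C^\alpha}$, but this is immediate.
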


\section{Stability results for $C^{1+\alpha}$ regularity, $ \alpha>0$ }\label{sec rp}
In this section we provide the proof of Theorem \ref{thm mainthm1}, where we consider deformation fields $\tau \in C^{1+\alpha}(\rd;\rd)$ for $0< \alpha < 1$. Recall that this implies $\tau \in L^\infty(\rd;\rd)$ and $D\tau \in C^\alpha(\rd;\rd)$. We assume that $\phi$ and $\psi$ are scattering filters in the sense of Section \ref{sec a brief review}.


The proof of Theorem \ref{thm mainthm1} mostly relies on the structure of \cite[Theorem 2.12]{mall cpam} and the ancillary results \cite[Lemmas 2.13 and 2.14]{mall cpam}. For our purposes we only need to isolate a limited number of key steps and elaborate on those as detailed below. Nevertheless, let us briefly discuss the complete roadmap for the sake of clarity. 

The goal is to bound the quantity $\| S_J[\cP_J](L_\tau f) - S_J[\cP_J](f) \|$ in terms of $\| \tau \|_{\Ca}$ and a scattering norm of $f$. It is clear that
\[ \| S_J[\cP_J](L_\tau f) - S_J[\cP_J](f) \| \le \| L_\tau(S_J[\cP_J] f) - S_J[\cP_J](f) \| + \| [S_J[\cP_J],L_\tau]\|,\] where the commutator of two operators $A,B$ is defined by $[A,B] = AB - BA$. 

Let us focus on the first term. The stabilizing properties of the average $A_J$ underlying $S_J[\cP_J]$ can be exploited to prove that
\[ \| L_\tau(S_J[\cP_J] f) - S_J[\cP_J](f) \| \lesssim 2^{-J} \|\tau \|_{L^\infty} \| U[\cP_J] f \|, \] see \cite[(2.42) and (2.51)]{mall cpam} -- which actually hold assuming only $\tau \in C^1(\rd;\rd)$. 

Controlling the commutator error $\|[S_J[\cP_J],L_\tau]\|$ is actually the main difficulty of this result. First, it is proved in \cite[Lemma 2.13]{mall cpam} that for any operator $L$ on $L^2(\rd)$ one has
\[ \| [S_J[\cP_J],L] f\| \le \| [U_J,L] \| \| U[\cP_J] f \|_1, \] where $U_J h = \{ A_J h, U[\Lambda_J]h\}$ for $h \in L^2(\rd)$. Note that $U_J = MW_J$, where $M$ is the nonexpansive operator on $\ell^2(\{J\} \cup \Lambda_J;L^2(\rd))$ given by $M\{ h_J, (h_\lambda)_{\lambda \in \Lambda_J}\} = \{ h_J, (|h_\lambda|)_{\lambda \in \Lambda_J}\}$, and since $[M,L_\tau]=0$ the problem ultimately reduces to bounding the commutator error $\| [W_J,L_\tau]\|$ between the Littlewood-Paley wavelet transform $W_J$ at scale $2^J$ and the deformation operator $L_\tau$. For $\tau \in C^2(\rd;\rd)$ it was proved in \cite[Lemma 2.14]{mall cpam} that
\[ \| [W_J,L_\tau]\| \lesssim 2^{-J}\|\tau\|_{L^\infty} + \badmax \|D\tau\|_{L^\infty} + \| D^2\tau \|_{L^\infty}. \]  
The proof of this result is a technical \textit{tour de force} among quite delicate estimates. Roughly speaking, the operator $[W_J,L_\tau]^*[W_J,L_\tau]$ has a singular kernel along the diagonal, and the standard method of harmonic analysis suggests considering a suitable frequency decomposition. The singular part of the operator is then carried by the high-frequency terms, and the latter are eventually bounded using the Cotlar lemma. 

Our contribution in this connection is an improvement of the estimate above.

\begin{proposition}\label{prop commutator}
Let $0<\alpha<1$. There exists a constant $C>0$ such that for all $J \in \bZ$ and $\tau \in C^{1+\alpha}(\rd;\rd)$, with $\| D\tau \|_{L^\infty} \le 1/2$,
\begin{equation}\label{eq main comm bound}
    \| [W_J,L_\tau]\| \le C \lc 2^{-J}\|\tau\|_{L^\infty} + \badmax \|D\tau\|_{L^\infty} + |D\tau|_{\Ca}\rc. 
\end{equation}
\end{proposition}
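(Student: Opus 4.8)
The plan is to follow the skeleton of the proof of \cite[Lemma 2.14]{mall cpam} and to pinpoint the (few) places where the stronger hypothesis $\tau\in C^2$ is genuinely exploited, replacing there the quantity $\|D^2\tau\|_{L^\infty}$ by $|D\tau|_{\Ca}$. I would begin by verifying that in \cite{mall cpam} the second derivatives of $\tau$ intervene only through two mechanisms: (i) the estimate of the second-order Taylor remainder
\[
R(x,y):=\tau(x)-\tau(y)-D\tau(y)(x-y),
\]
which quantifies the deviation of $L_\tau$ from its local affine model and hence its failure to commute with a wavelet convolution up to a change of frame; and (ii) the control of first derivatives of the Jacobian $x\mapsto|\det(I-D\tau(x))|$ (and of the analogous Jacobian of the inverse diffeomorphism) arising from the change of variables attached to $L_\tau$. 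Everything else — in particular the contribution of the low-pass filter $\phi_{2^J}$, which produces the term $2^{-J}\|\tau\|_{L^\infty}$, and the logarithmic factor $\badmax$, which only involves $\|\Delta\tau\|_{L^\infty}$ and $\|D\tau\|_{L^\infty}$ — rests on zeroth- and first-order information and is used verbatim.

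For the two mechanisms above I would substitute the following $C^{1+\alpha}$ bounds. Writing $R(x,y)=\int_0^1\big(D\tau(y+t(x-y))-D\tau(y)\big)(x-y)\,dt$ and using $D\tau\in\Ca$ gives at once
\[
|R(x,y)|\le \tfrac{1}{1+\alpha}\,|D\tau|_{\Ca}\,|x-y|^{1+\alpha},
\]
the exact surrogate of the bound $|R(x,y)|\le\|D^2\tau\|_{L^\infty}|x-y|^2$ used in \cite{mall cpam}. For the Jacobian, since $\|D\tau\|_{L^\infty}\le1/2$ the map $x\mapsto|\det(I-D\tau(x))|$ stays in a fixed compact subset of $(0,\infty)$, and combining the fractional Leibniz rule \eqref{eq frac leib} (iterated over the monomials of the determinant) with the Schauder estimate \eqref{eq schaud} (applied to $s\mapsto 1/s$, together with the bi-Lipschitz change of variables $I-\tau$) yields
\[
\big|\,|\det(I-D\tau)|\,\big|_{\Ca}+\big|\,|\det(I-D\tau)|^{-1}\,\big|_{\Ca}\le C_d\,|D\tau|_{\Ca},
\]
the surrogate of $\|\nabla|\det(I-D\tau)|\|_{L^\infty}\lesssim\|D^2\tau\|_{L^\infty}$.

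With these replacements one reruns the argument. After the preliminary normalizations and the dyadic frequency decomposition of $[W_J,L_\tau]^*[W_J,L_\tau]$ as in \cite{mall cpam}, the singular part is carried by the high-frequency block: for a wavelet $\psi_\lambda$ at spatial scale $2^{-j}$ the relevant operator $T_\lambda$, obtained after subtracting the affine model of $L_\tau$, has a kernel of size governed by $|R(x,y)|$ and by the Hölder oscillation of the Jacobians over a patch of diameter $\sim2^{-j}$; hence each factor $\|D^2\tau\|_{L^\infty}2^{-j}$ appearing in \cite{mall cpam} is now replaced by $|D\tau|_{\Ca}\,2^{-j\alpha}$ (up to lower-order terms). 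One then checks, exactly along the lines of \cite[Lemma 2.14]{mall cpam}, that the off-diagonal products $T_\lambda^*T_\mu$ still enjoy a geometric decay in the scale difference $|j-j'|$, so that the Cotlar lemma applies and $\sup_\lambda\sum_\mu\|T_\lambda^*T_\mu\|^{1/2}$ is summable; this produces the factor $|D\tau|_{\Ca}$ in \eqref{eq main comm bound}, with a constant that now also depends on $\alpha$ and blows up as $\alpha\to0^+$ — in accordance with the instability of Theorem \ref{thm mainthm0}.

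The main obstacle is precisely this last verification: Mallat's almost-orthogonality/Cotlar estimates for the high-frequency block are calibrated to the $O(|x-y|^2)$ curvature decay afforded by $C^2$ regularity, and one must confirm that they still close once this decay is weakened to $O(|x-y|^{1+\alpha})$ — i.e.\ that the surplus exponent $\alpha>0$ (in place of $1$) still leaves every geometric series over scales, and every Schur-type off-diagonal bound, convergent. I expect this to be a matter of carefully tracking exponents rather than of new ideas: each occurrence of $2^{-j}$ coming from ``one derivative hitting $\tau$ beyond its affine part'' becomes $2^{-j\alpha}$, each $\sum_j2^{-j}$ becomes $\sum_j2^{-j\alpha}$, and the constants acquire a harmless dependence on $\alpha$. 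A secondary technical nuisance is that, since only $D\tau\in\Ca$ is available, wherever \cite{mall cpam} integrates by parts against $\nabla|\det(I-D\tau)|$ one must instead pair the Hölder seminorm of the Jacobian against $|x-y|^\alpha|\psi_\lambda(x-y)|\lesssim2^{-j\alpha}$, which is exactly where the two bounds above are invoked.
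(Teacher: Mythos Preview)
Your overall plan coincides with the paper's: follow Mallat's proof of \cite[Lemma 2.14]{mall cpam}, keep everything that only uses $\tau\in C^1$, and upgrade the two $C^2$-specific ingredients. Your treatment of mechanism~(i) is exactly what the paper does: the Taylor remainder bound $|R(x,y)|\lesssim |D\tau|_{\Ca}|x-y|^{1+\alpha}$ feeds into a Schur estimate for the operator $K_{j,2}$ and yields $\|K_{j,2}\|\lesssim\min\{2^{-j\alpha}|D\tau|_{\Ca},\|D\tau\|_{L^\infty}\}$, which is summable over $j\ge0$.

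The gap is in your treatment of the Cotlar block $\big\|\sum_{j\ge0}K_{j,1}^*K_{j,1}\big\|$. Your description of mechanism~(ii) is not quite accurate: in Mallat's argument there is no integration by parts ``against $\nabla|\det(I-D\tau)|$''. Rather, the off-diagonal decay $\|Q_lQ_j\|\lesssim 2^{-|j-l|}$ is obtained by writing one factor $g(z,x'+2^j(u-z))$ as a $v_1$-derivative of an antiderivative $\bar g$ and integrating by parts in $u_1$; the $u_1$-derivative then lands on $G(u)\coloneqq g(u,x)g(u,x')$, and it is $\partial_u G$ that brings in $D^2\tau$ (through the $u$-dependence of $g$ on $D\tau(u)$). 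So the quantity to replace is $\|G\|_{C^1}$, not just the Jacobian gradient, and a direct ``pair the H\"older seminorm of the Jacobian against $|x-y|^\alpha|\psi_\lambda|$'' does not by itself produce the $2^{-|j-l|\alpha}$ almost-orthogonality needed for Cotlar.

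The paper's device for this step is different from your sketch and is the one nontrivial new idea. One isolates the subadditive functional
\[
\varphi(G)=\int\Big|\int G(u)\,2^{dl}\overline{g(y,x+2^l(u-y))}\,2^{dj}\overline{g(z,x'+2^j(u-z))}\,du\Big|\,dy,
\]
and observes that Mallat's argument already gives the two \emph{endpoint} bounds $\varphi(G)\lesssim\|D\tau\|_{L^\infty}^2\|G\|_{C^0}$ and $\varphi(G)\lesssim 2^{l-j}\|D\tau\|_{L^\infty}^2\|G\|_{C^1}$, neither of which requires more than $\tau\in C^1$. Real interpolation (for subadditive operators, via $[C^0,C^1]_{\alpha,\infty}=\Ca$) then yields $\varphi(G)\lesssim 2^{(l-j)\alpha}\|D\tau\|_{L^\infty}^2\|G\|_{\Ca}$. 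Only at this point does one estimate $\|G\|_{\Ca}=\|g(\cdot,x)g(\cdot,x')\|_{\Ca}$ via the fractional Leibniz rule~\eqref{eq frac leib} and the Schauder estimate~\eqref{eq schaud}; here your H\"older bound on $\det(I-D\tau)$ is indeed one of several ingredients, alongside analogous bounds for $u\mapsto h((I-D\tau(u))v)$ and $u\mapsto Dh((I-tD\tau(u))v)\cdot D\tau(u)v$. This interpolation step is what converts ``track the exponents'' into an actual proof; without it your Cotlar verification does not close.
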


\begin{proof}

We follow the same pattern of the proof of \cite[Lemma 2.14]{mall cpam}, taking for granted all the estimates proved there under the assumption $\tau \in C^1(\rd;\rd)$. We thus confine ourselves to describe the main strategy, while focusing on the necessary modifications. We adhere to the notation used in Mallat's proof for the convenience of the reader. 

First, the problem is recast as follows:
\begin{align}\label{eq main comm est}
    \| [W_J,L_\tau] \| & = \| [W_J,L_\tau]^*[W_J,L_\tau] \|^{1/2} \nonumber \\
    & \le \underbrace{\sum_{r \in G^+} \Big\Vert \sum_{j=-J+1}^\infty [W[2^jr],L_\tau]^*[W[2^jr],L_\tau]   \Big\Vert^{1/2}}_{\eqqcolon I_1} + \underbrace{\| [A_J,L_\tau]^* [A_J,L_\tau] \|^{1/2}}_{\eqqcolon I_2}. 
\end{align}
Bounds for the latter quantities are derived in \cite[Lemma E.1]{mall cpam}, that will be improved as well in accordance with our weaker regularity assumptions. The main argument goes as follows. For $j \in \bZ$ consider 
\begin{equation} \label{eq def Zj}
Z_j f = f*h_j, \qquad h_j(x)=2^{dj}h(2^j x), \end{equation} for a twice differentiable function $h \colon \rd \to \bC$ that decays like $O((1+|x|)^{-d-3})$ along with all its first- and second-order partial derivatives. We introduce the companion operators $K_j \coloneqq Z_j - L_\tau Z_j L_\tau^{-1}$ and note that $K_j = K_{j,1} + K_{j,2}$, where the latter are integral operators with kernels respectively given by
\begin{equation} \label{eq ker kj1} k_{j,1}(x,u) = 2^{dj} g(u,2^j(x-u)), \end{equation}
\[ k_{j,2}(x,u) = \det(I-D\tau(u)) \lc h_j((I-D\tau(u))(x-u)) - h_j(x-\tau(x)-u+\tau(u))\rc, \] where we set
\begin{equation}\label{eq def g} g(u,v) \coloneqq h(v)-h\big((I-D\tau(u)) v \big) \det(I-D\tau(u)), \quad (u,v) \in \rdd. \end{equation}

\par\medskip{\noindent \bf Step 1. Bound for $I_2$.} \par
In light of the previous assumptions, we have
\[ \|[Z_j,L_\tau]\| = \|[Z_j,L_\tau]^*[Z_j,L_\tau]\|^{1/2} \le \|L_\tau \| \| K_j^* K_j \|^{1/2} = \|L_\tau\| \|K_j\|\leq 2^{d/2}\|K_j\|. \] 
A bound for $I_2= \| [A_J,L_\tau]\|$ can thus be obtained by bounding $\|K_j\|$ in the case where $h=\phi$ and $j=-J$. In particular, since
\begin{equation}\label{eq Kj sum} \| K_j \| \le \|K_{j,1} \| + \|K_{j,2}\|,\end{equation} it is enough to separately control the norms of the latter integral operators. 

\par\medskip{\noindent \bf 1.1. Bound for $\|K_{j,1}\|$.} \par This was already proved in \cite[Eq. (E.18)]{mall cpam}: 
\begin{equation}\label{eq bound Kj1}
    \| K_{j,1} \| \lesssim \| D\tau \|_{L^\infty}. 
\end{equation}

\par\medskip{\noindent \bf 1.2. Bound for $\| K_{j,2}\|$.} \par

Consider the kernel $k_{j,2}$. A Taylor expansion of $\tau(x)$ centered at $u$ gives
\begin{align*}
    \tau(x)- \tau(u) & = \int_0^1 D\tau(u+t(x-u))(x-u) dt \\ 
    & = D\tau(u) (x-u) + \int_0^1 \big( D\tau(u+t(x-u)) - D\tau(u)\big) (x-u) dt \\ & = D\tau(u)(x-u) + \alpha(u,x-u),
\end{align*} where in view of the assumption on $\tau$ the remainder $\alpha(u,x-u)$ defined by the above formula satisfies 
\begin{equation} \label{eq rem bound}
|\alpha(u,x-u)| \le |D\tau|_{\Ca} |x-u|^{1+\alpha}.    
\end{equation}
Combining this result with a Taylor expansion of $h_j$ finally gives
\begin{multline*}
 k_{j,2}(x,u) = -\det(I-D\tau(u)) \int_0^1 Dh_j \big((I-tD\tau(u))(x-u) \\ + (1-t)(\tau(u)-\tau(x))\big) \alpha(u,x-u) dt. 
\end{multline*} 

We now infer a bound for $\|K_{j,2}\|$ by controlling the norm of the kernel in view of Schur's lemma. First, note that the assumption $\|D\tau\|_{L^\infty} \le 1/2$ implies $|\det(I-D\tau(y))| \le 2^d$. Moreover, using $Dh_j(y) = 2^{j(d+1)}Dh(2^jy)$ and the substitution $x'=2^j(x-u)$ we obtain
\begin{multline*}
 \ird |k_{j,2}(x,u)|dx \le 2^d \ird \bigg| \int_0^1 Dh \big((I-tD\tau(u))x' \\ + (1-t)2^j(\tau(u)-\tau(2^{-j}x'+u))\big) 2^j\alpha(u,2^{-j}x') dt \bigg| dx'. 
\end{multline*} Recall that $|Dh(y)| \lesssim (1+|y|)^{-d-3}$ by assumption, and it is easy to see that for $0\le t \le 1$ we have
\begin{align*}
 \big| (I-tD\tau(u))x' + (1-t)2^j(\tau(u)-\tau(2^{-j}x'+u))\big|  & \ge |x'|(1-\|D\tau\|_{L^\infty}) \\ & \ge |x'|/2.
\end{align*}
Concerning the term $|2^j \alpha(u,2^{-j}x')|$, on the one hand, by virtue of \eqref{eq rem bound} we have  
    \[ |2^j \alpha(u,2^{-j}x')| \lesssim 2^{-j\alpha} |D\tau|_{\Ca} |x'|^{1+\alpha}.  \] 
    Then
    \[ \ird |k_{j,2}(x,u)|dx  \lesssim  2^{-j\alpha} |D\tau|_{\Ca} \ird (1+|x'|/2)^{-d-3} |x'|^{1+\alpha} dx'  \lesssim 2^{-j\alpha}|D\tau|_{\Ca}. \]
    
    On the other hand, we observe that
    \[ |2^j \alpha(u,2^{-j}x')| =2^j |\tau(2^{-j}x' + u) - \tau(u)-D\tau(u) (2^{-j}x')| \le  2 \|D\tau\|_{L^\infty} |x'|,\] 
    resulting in
    \[ \ird|k_{j,2}(x,u)|dx  \lesssim \|D\tau\|_{L^\infty}  \ird (1+|x'|/2)^{-d-3} |x'| dx' \lesssim \|D\tau\|_{L^\infty}. \]
Combining the previous estimates gives
\[ \ird |k_{j,2}(x,u)|dx \lesssim \min\{ 2^{-j\alpha}|D\tau|_{\Ca}, \|D \tau\|_{L^\infty}\}, \] and the same bound holds for $\ird |k_{j,2}(x,u)|du$ since the previous arguments apply in the same form after the substitution $u'=2^j(x-u)$. As a consequence of Schur's lemma we thus obtain
\begin{equation} \label{eq bound Kj2} \|K_{j,2} \| \lesssim \min\{ 2^{-j\alpha}|D\tau|_{\Ca}, \|D \tau\|_{L^\infty}\}. \end{equation}
The combination of the estimates above shows that the term $I_2$ in \eqref{eq main comm est} is $\lesssim \|D\tau\|_{L^\infty}$.

\par\medskip{\noindent \bf Step 2. Bound for $I_1$.} \par
Consider again the convolution operator $Z_j$ introduced in \eqref{eq def Zj}. By mimicking \cite[Lemma E.1]{mall cpam}, we will prove that if $\ird h(x)dx=0$ then
\begin{equation}\label{eq sum diagonal}
    \lV \sum_{j =-\infty}^{+\infty} [Z_j,L_\tau]^* [Z_j,L_\tau] \rV^{1/2} \lesssim \badmax \| D\tau\|_{L^\infty} + |D\tau|_{\Ca}. 
\end{equation} Consider in particular the case where $h(x)=\psi(r^{-1}x)$ for each $r \in G^+$ and replace $-\infty$ with $-J+1$ in the summation ($[Z_j,L_\tau]^* [Z_j,L_\tau]$ is a positive operator). The resulting bound, combined with the one for $I_2$ proved above, will conclude the proof of \eqref{eq main comm bound} and thus of Proposition \ref{prop commutator}.

To this aim, let us first remark that the nonsingular part of the commutator has been isolated and bounded in \cite[Pages 1386-1387]{mall cpam}, yielding
\begin{equation}\label{eq sum diagonal 2}
\lV \sum_{j =-\infty}^{+\infty} [Z_j,L_\tau]^* [Z_j,L_\tau] \rV^{1/2} \lesssim \badmax \| D\tau\|_{L^\infty} + \lV \sum_{j = 0}^{+\infty} K_j^* K_j \rV^{1/2}. 
\end{equation}


\par\medskip{\noindent \bf 2.1. Bound for $\big\| \sum_{j=0}^{+\infty}
K_j^* K_j \big\|^{1/2}$.}\par
Recall that $K_j=K_{j,1}+K_{j,2}$, hence
\begin{align} \label{eq sum diagonal 3}\lV \sum_{j=0}^{+\infty} K_j^* K_j \rV^{1/2} & \le \lV \sum_{j=0}^{+\infty} K_{j,1}^* K_{j,1} \rV^{1/2} \\
& + \sum_{j=0}^{+\infty} \lc \|K_{j,2}\| + 2^{1/2}\|K_{j,1}\|^{1/2}\|K_{j,2}\|^{1/2}\rc.\nonumber\end{align}
Using \eqref{eq bound Kj1} and \eqref{eq bound Kj2} above, we have that
\begin{equation}\label{eq 4.12bis}
 \sum_{j=0}^{+\infty} \lc \|K_{j,2}\| + 2^{1/2}\|K_{j,1}\|^{1/2}\|K_{j,2}\|^{1/2}\rc \lesssim \|D\tau\|_{L^\infty} + |D\tau|_{\Ca}. 
 \end{equation}

\par\medskip{\noindent \bf  2.2. Bound for $\big\| \sum_{j=0}^{+\infty} K_{j,1}^* K_{j,1} \big\|^{1/2}$.} \par 
Recall that here we are assuming that $\ird h(x)dx=0$. 

The goal of this section is to provide a bound for $\big\| \sum_{j=-\infty}^{+\infty} Q_j \big\|$, where we set $Q_j=K_{j,1}^*K_{j,1}$ for $j\geq0$ and $Q_j =0$ for $j<0$. We will apply Cotlar's lemma \cite[Chapter VII]{stein}: if there is a sequence of positive real numbers $\{\beta(j)\}_{j\in \bZ}$ such that $\sum_{j\in \bZ} \beta(j) < \infty$ and
\[ \| Q_j^* Q_l \| \le \beta(j-l)^2, \quad \| Q_j Q_l^* \| \le \beta(j-l)^2, \] then
\[ \lV \sum_{j = -\infty}^{+\infty} Q_j \rV \le \sum_{j = -\infty}^{+\infty} \beta(j).\] 
As a consequence of the self-adjointness of $Q_j$ it is enough to provide a bound for $\|Q_l Q_j\|$ only, hence we resort again to Schur's lemma.  

Let  $\bar{k}_{l,j}$ be the integral kernel of $Q_l Q_j$. Using \eqref{eq ker kj1} we obtain
\begin{multline}
    \ird |\bar{k}_{l,j}(y,z)|dy = \ird \Big| \ird g(u,x)g(u,x')2^{dl}\overline{g(y,x+2^l(u-y))} \\ \times 2^{dj}\overline{g(z,x'+2^j(u-z))}dxdx'du \Big| dy.
\end{multline} 
Therefore, we need to prove a suitable bound for the functional
\begin{multline} \varphi(G) = \varphi_{x,x',z,j,l}(G) \coloneqq \ird \Big| \ird G(u) 2^{dl}\overline{g(y,x+2^l(u-y))} \\ \times 2^{dj}\overline{g(z,x'+2^j(u-z))}du \Big| dy, 
\end{multline} for $G \in \Ca(\rd)$.
We restrict to the case $j\ge l\ge 0$ in view of the symmetric role of these parameters.
We will prove below that
\begin{equation} \label{eq bound varphiG} \varphi(G) \lesssim 2^{(l-j)\alpha} \| D\tau\|_{L^\infty}^2 \|G\|_{\Ca}, \end{equation} where the hidden constant does not depend on $x,x',z$. We are going to apply this estimate to $G(u) \coloneqq g(u,x)g(u,x')$, where here $x,x'\in\rd$ play the role of parameters. We then will  prove that
\begin{equation}\label{eq 30}
\|G\|_{\Ca}\lesssim (1+|x|)^{-d-1}(1+|x'|)^{-d-1} \| D\tau \|_{L^\infty}\big(\| D\tau \|_{L^\infty} + |D\tau|_{\Ca} \big).
\end{equation}

The latter bounds allow us to conclude that 
\begin{align*} \ird |\bar{k}_{l,j}(y,z)|dy  & \leq \ird \varphi(g(\cdot,x)g(\cdot,x'))\,dx\, dx' \\
& \lesssim 2^{(l-j)\alpha}\big( \|D\tau\|_{L^\infty}^4 + \|D\tau\|_{L^\infty}^3 |D\tau|_{\Ca} \big) \\
& \lesssim 2^{(l-j)\alpha}\big( \|D\tau\|_{L^\infty}+  |D\tau|_{\Ca} \big)^4. 
\end{align*}
The same arguments lead one to the same bound for $\ird |\bar{k}_{l,j}(y,z)|dz$. Therefore, by Schur's lemma we have
\[ \| Q_l Q_j \| \lesssim  2^{(l-j)\alpha}\big( \|D\tau\|_{L^\infty}+  |D\tau|_{\Ca} \big)^4, \]
and Cotlar's lemma with $\beta(j) = C 2^{-|j|\alpha/2}  \big( \|D\tau\|_{L^\infty}+  |D\tau|_{\Ca} \big)^2$ for a suitable constant $C>0$ finally implies that \[ \lV \sum_{j = 0}^{+\infty} K_{j,1}^*K_{j,1} \rV \lesssim \big( \|D\tau\|_{L^\infty}+  |D\tau|_{\Ca} \big)^2,
\]
which combined with \eqref{eq sum diagonal 2}, \eqref{eq sum diagonal 3} and \eqref{eq 4.12bis} 
gives \eqref{eq sum diagonal} and concludes the proof.

The proofs of \eqref{eq bound varphiG} and \eqref{eq 30} are given below. 
\end{proof}

\begin{proof}[Proof of \eqref{eq bound varphiG}]

A Taylor expansion of $h$ in \eqref{eq def g} (cf.\ \cite[(E.30)]{mall cpam}) implies that 
\begin{multline} \label{eq tildeg}
    g(u,v) =  \big( 1-\det (I-D\tau(u))\big) h\big( (I-D\tau(u))v \big) \\
    + \int_0^1 Dh\big( (1-t)v+t(I-D\tau(u))v \big) \cdot D\tau(u) v dt.
\end{multline}
Using that $\det (I-D\tau(u)) \ge (1- \|D\tau\|_{L^\infty})^d$,  $\|D\tau\|_\infty\le 1/2$ and the fact that $h(x)$ and $Dh(x)$ by assumption decay like $(1+|x|)^{-d-3}$, we obtain (cf.\ \cite[(E.31)]{mall cpam}) that
\begin{equation}\label{eq mall E31} |g(u,v)| \lesssim \|D\tau\|_{L^\infty} (1+|v|)^{-d-2}.
\end{equation} 
Using this estimate it is easy to infer that for any $G \in C^0(\rd)$ we have
\[ \varphi(G) \lesssim \|D\tau\|_{L^\infty}^2 \|G\|_{C^0}.\] Moreover, the assumption $\int_{\rd} h(x)\, dx=0$ (implying $\int_{\rd} g(u,v)\, dv=0$ for all $u$) and \eqref{eq mall E31} imply that we can write $g(u,v)=\partial \overline{g}(u,v)/\partial v_1$, with $\overline{g}(u,v)$ satisfying (cf. \cite[(E.37)]{mall cpam}) 
\[
|\overline{g}(u,v)|\leq C\|D\tau\|_{\infty}(1+|v|)^{-d-1},
\]
and from \eqref{eq tildeg} we have (cf.\ \cite[(E.39)]{mall cpam}) 
\[
\Big|\frac{\partial g(u,v)}{\partial v_1}\Big|\lesssim \|D\tau\|_{\infty}(1+|v|)^{-d-1}.
\]
Using these estimates, an integration by parts with respect to $u_1$ (cf. \cite[Page 1391]{mall cpam}) therefore yields
\begin{align*}
    \varphi(G) & \lesssim 2^{-j} \|D\tau\|_{L^\infty}^2 \| G \|_{C^1}+ 2^{l-j}\|D\tau\|_{L^\infty}^2 \| G \|_{C^0} \\ & \lesssim 2^{l-j}\|D\tau\|_{L^\infty}^2 \| G \|_{C^1}, 
\end{align*} for every $G\in C^1(\rd)$. Note that the functional $\varphi$ is subadditive, namely $|\varphi(f+g)| \le |\varphi(f)| + |\varphi(g)|$. By real interpolation (see e.g., \cite[Theorem 6]{maligranda}), since $[C^0,C^1]_{\alpha,\infty} = C^\alpha$ (see e.g., \cite[(1.16)]{lunardi}), we obtain \eqref{eq bound varphiG} for every $G \in C^\alpha(\rd)$. 
\end{proof}

\begin{proof}[Proof of \eqref{eq 30}]
Recall that $G(u)=g(u,x)g(u,x')$, where $g$ is given in \eqref{eq tildeg}.  
From the very definition of the $\Ca$ norm, using the fractional Leibniz rule \eqref{eq frac leib} and \eqref{eq mall E31} to bound the norm in $L^\infty(\rd)$ of $G$, it suffices eventually to show that  
\begin{equation}\label{eq 4.19}
    |g(\cdot,v)|_{\Ca} \lesssim (1+|v|)^{-d-1} \big(\| D\tau \|_{L^\infty} + |D\tau|_{\Ca} \big).
\end{equation}
We use the expression in \eqref{eq tildeg}, and again the fractional Leibniz rule \eqref{eq frac leib}. Precisely, the desired estimate  \eqref{eq 4.19} will follow from the bounds given below on the $L^\infty$ norm and $C^\alpha$ seminorm in $\rd$ (with respect to $u$) of the factors appearing in $\eqref{eq tildeg}$.

\begin{itemize} \setlength\itemsep{2ex}
    
    \item 
    Let $b_1(u)=1-\det (I-D\tau(u))$. Since $\det (I-D\tau(u)) \ge (1- \|D\tau\|_{L^\infty})^d$ we have
    \[ \|b_1\|_{L^\infty} \lesssim \|D\tau\|_{L^\infty}.\] Moreover, since $b_1$ is a polynomial in the entries of the matrix $D\tau(u)$ and $\|D\tau(u)\|_{L^\infty} \le 1/2$, we obtain
    \[ |b_1|_{\Ca} \lesssim |D\tau|_{\Ca} \] by a straightforward application of Schauder's estimate \eqref{eq schaud}.
    
    \item Define 
    $b_2(u,v)=h\big( (I-D\tau(u)) v\big)$. Clearly
    \[ \|b_2(\cdot,v) \|_{L^\infty} \lesssim (1+|v|)^{-d-3}. \] A bound for $|b_2(\cdot,v)|_{\Ca}$ can be obtained using Schauder's estimate \eqref{eq schaud}, in particular 
    \[ |b_2(\cdot,v)|_{\Ca} \lesssim |h|_{\Lip(B_v)} |\wt{b_2}(\cdot,v)|_{\Ca},\]
    where we introduced the companion map $\wt{b_2}(u,v)=(I-D\tau(u)) v$ and $B_v$ stands for the range of the map $\tilde{b}_2(\cdot,v)$, for fixed $v$. 
    
    First, we remark that
     \[ |\wt{b_2}(\cdot,v)|_{\Ca} \lesssim |D\tau|_{\Ca}|v|. 
     \] Moreover, since $Dh(u) = O((1+|u|)^{-d-3})$ by assumption, and since $B_v$ is contained in the ball $B(v,|v|/2)$, we have
     \[
     |h|_{\Lip(B_v)} \lesssim (1+|v|)^{-d-3}. \] 
   We then conclude that
    \[ |b_2(\cdot,v)|_{\Ca} \lesssim |D\tau|_{\Ca} (1+|v|)^{-d-2} \] as a combination of the previous results. 
    \item Lastly, consider the map $b_3$
    defined for $0 \le t \le 1$ by \[ b_3(u,v)= Dh \big( I-t D\tau(u))v \big) \cdot D\tau(u)v. \]
    
    An application of the fractional Leibniz formula \eqref{eq frac leib} combined with the same estimates for $b_2$ above with $Dh$ in place of $h$ finally gives
    \begin{align*}
        |b_3(\cdot,v)|_{\Ca} &\lesssim (1+|v|)^{-d-2} |D\tau|_{\Ca} + (1+|v|)^{-d-1}|D\tau|_{\Ca} \|D\tau\|_{L^\infty}\\
        &\lesssim (1+|v|)^{-d-1}|D\tau|_{\Ca}.
        \end{align*}
    \end{itemize}
This concludes the proof of  \eqref{eq 4.19}. 
\end{proof}
\begin{remark}\label{rem J1} It was already pointed out in \cite{mall cpam} that the term $\badmax$ in \eqref{eq sum diagonal 2} can be replaced by $\max\{J,1\}$ -- to be precise, it is enough to choose $\gamma=\max\{J,1\}$ in \cite[(E.7)--(E.10)]{mall cpam}. Therefore, we have also the estimate 
\begin{multline}\label{eq main fm1 bis}
        \| S_J[\cP_J](L_\tau f) - S_J[\cP_J](f) \| \\ \le  C \lc 2^{-J}\|\tau\|_{L^\infty} + \max\{J,1\} \|D\tau\|_{L^\infty} + |D\tau|_{\Ca}\rc \|U[\cP_J]f\|_1. 
\end{multline}
\end{remark}

\section{Stability up to $\varepsilon$ losses for Lipschitz deformations}
In this section we focus on Lipschitz deformations; in particular, we prove Theorem \ref{thm mainthm 3}. We continue to assume that $\phi$ and $\psi$ are scattering filters in the sense of Section \ref{sec a brief review}. 

We need some preliminary results from the theory of real interpolation of Besov spaces (cf.\ \cite{triebel}). 

Let $\phi_0\in C^\infty(\rd)$ be supported in the ball $|\omega|\leq 2$, with $\phi_0(\omega)=1$ for $|\omega|\leq 1$. Set  $\phi_j(\omega)=\phi_0(2^{-j} \omega)$, $j\in \bZ$. The functions $\tilde{\phi}_j\coloneqq\phi_j-\phi_{j-1}$, $j\in\bZ$, are supported in the annuli $2^{j-1}\leq|\omega|\leq 2^{j+1}$ and induce a Littlewood-Paley partition of unity of $\rd\setminus\{0\}$. Recall that the Besov norms with $s\in\bR$, $1\leq p,q\leq\infty$ are accordingly defined, for a temperate distribution $f$, by
\[
\|f\|_{B^s_{p,q}}\coloneqq\Big(\|\phi_0(D) f\|^q_{L^p}+\sum_{j\geq 1}2^{jsq}\|\tilde{\phi}_j(D)f\|_{L^p}^q\Big)^{1/q},
\] where $\tilde{\phi}_j(D)=\cF^{-1}\tilde{\phi}_j\cF$ stands for the Fourier multiplier with symbol $\tilde{\phi}_j$, $j \in \bZ$, and similarly $\phi_0(D)=\cF^{-1}\phi_0\cF$. Obvious changes are needed if $q=\infty$. 

Recall that for $s>0$ \textit{not} integer, the space $B^s_{\infty,\infty}(\rd)$ coincides with the H\"older class $C^s(\rd)$ considered above. If $s=1$ then $B^1_{\infty,\infty}(\rd)$ \textit{contains} the space ${\rm Lip}(\rd)$ of bounded Lipschitz function $f\colon \rd\to\bC$, endowed with the norm  
\[
\|f\|_{{\rm Lip}}\coloneqq\|f\|_{L^\infty}+|f|_{{\rm Lip}}= \|f\|_{L^\infty}+\|\nabla f \|_{L^\infty},
\]
where $\nabla f$ is understood in the sense of distributions or even almost everywhere.  


With a temperate distribution $f$ we also associate the set 
\begin{equation}\label{eq def A}
\cA_f\coloneqq \{\phi_j(D) f:\ j\geq 0\}.
\end{equation}
Observe that the operators $\phi_j(D)$, $j\in \bZ$, are uniformly bounded on $L^\infty(\rd)$, since $\cF^{-1}\phi_0\in L^1(\rd)$. 

\begin{lemma}\label{lem inter}

Let $0<\theta<1$ and $s>1$ be such that $1=(1-\theta)s$. For $f\in B^0_{\infty,1}(\rd)$ and $t>0$, consider 
\[
\tilde{K}(t,f,B^s_{\infty,\infty},B^0_{\infty,1})\coloneqq\inf_{f_0\in \cA_f} \{\|f_0\|_{B^s_{\infty,\infty}}+t\|f-f_0\|_{B^0_{\infty,1}}\}.
\]
There exists a constant $C>0$ such that, for every $f\in B^1_{\infty,\infty}(\rd)$, 
\begin{equation}\label{eq inter k}
\sup_{t>1} t^{-\theta} \tilde{K}(t,f,B^s_{\infty,\infty},B^0_{\infty,1})\leq C \|f\|_{B^1_{\infty,\infty}}.
\end{equation}
\end{lemma}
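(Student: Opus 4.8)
The goal is to estimate the modified $K$-functional $\tilde{K}(t,f,B^s_{\infty,\infty},B^0_{\infty,1})$ for $f\in B^1_{\infty,\infty}(\rd)$, where the competitor $f_0$ is constrained to lie in the set $\cA_f=\{\phi_j(D)f:j\ge 0\}$. The natural choice is $f_0=\phi_j(D)f$ for a suitable integer $j=j(t)\ge 0$, and then to balance the two contributions $\|\phi_j(D)f\|_{B^s_{\infty,\infty}}$ and $t\|f-\phi_j(D)f\|_{B^0_{\infty,1}}$. So the first step is to bound each of these two quantities in terms of $\|f\|_{B^1_{\infty,\infty}}$ and $2^j$.

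\textbf{Step 1: the two one-sided estimates.} For the first term, since $\phi_j(D)f$ has Fourier support in $|\omega|\le 2^{j+1}$, only finitely many Littlewood--Paley blocks $\tilde{\phi}_\ell(D)(\phi_j(D)f)$ with $\ell\le j+2$ are nonzero, each of which equals $\tilde{\phi}_\ell(D)f$ up to a uniformly-bounded Fourier multiplier when $\ell\le j$ (and is a mild modification near $\ell=j+1,j+2$). Using $\|\tilde{\phi}_\ell(D)f\|_{L^\infty}\lesssim 2^{-\ell}\|f\|_{B^1_{\infty,\infty}}$ (the definition of the $B^1_{\infty,\infty}$ norm), we get $2^{\ell s}\|\tilde{\phi}_\ell(D)(\phi_j(D)f)\|_{L^\infty}\lesssim 2^{\ell(s-1)}\|f\|_{B^1_{\infty,\infty}}$, and taking the supremum over $\ell\le j+2$ yields
\[
\|\phi_j(D)f\|_{B^s_{\infty,\infty}}\lesssim 2^{j(s-1)}\|f\|_{B^1_{\infty,\infty}}.
\]
For the second term, $f-\phi_j(D)f=\sum_{\ell>j}\tilde{\phi}_\ell(D)f$ (using the partition of unity and that $\phi_j=\phi_0(2^{-j}\cdot)=\sum_{\ell\le j}\tilde{\phi}_\ell$ up to the low-frequency piece), so
\[
\|f-\phi_j(D)f\|_{B^0_{\infty,1}}\le\sum_{\ell>j}\|\tilde{\phi}_\ell(D)f\|_{L^\infty}\lesssim\sum_{\ell>j}2^{-\ell}\|f\|_{B^1_{\infty,\infty}}\lesssim 2^{-j}\|f\|_{B^1_{\infty,\infty}}.
\]

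\textbf{Step 2: optimization and conclusion.} Combining the two estimates,
\[
\tilde{K}(t,f,B^s_{\infty,\infty},B^0_{\infty,1})\lesssim\big(2^{j(s-1)}+t\,2^{-j}\big)\|f\|_{B^1_{\infty,\infty}}
\]
for every integer $j\ge 0$. Given $t>1$, choose $j\ge 0$ with $2^j\approx t^{1/s}$ (possible since $t>1$), which makes both terms comparable to $t^{(s-1)/s}$. Since $1=(1-\theta)s$ we have $\theta=(s-1)/s$, hence $(s-1)/s=\theta$, and therefore
\[
t^{-\theta}\tilde{K}(t,f,B^s_{\infty,\infty},B^0_{\infty,1})\lesssim\|f\|_{B^1_{\infty,\infty}},
\]
uniformly in $t>1$, which is exactly \eqref{eq inter k}.

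\textbf{Main obstacle.} The arguments above are routine Littlewood--Paley manipulations; the one point requiring genuine care is the bookkeeping at the boundary frequencies when splitting $\phi_j(D)f$ into Littlewood--Paley blocks, namely checking that the blocks $\tilde{\phi}_\ell(D)\phi_j(D)f$ for $\ell\in\{j+1,j+2\}$ (where $\phi_j$ and $\tilde{\phi}_\ell$ have overlapping but not nested supports) still obey the bound $\|\tilde{\phi}_\ell(D)\phi_j(D)f\|_{L^\infty}\lesssim\|\tilde{\phi}_{\ell-1}(D)f\|_{L^\infty}+\|\tilde{\phi}_\ell(D)f\|_{L^\infty}\lesssim 2^{-\ell}\|f\|_{B^1_{\infty,\infty}}$ by expressing $\tilde{\phi}_\ell\phi_j$ as a finite combination of the original symbols times uniformly-$L^1$ Fourier multipliers; one must also be slightly careful that $\phi_j(D)f\in B^0_{\infty,1}$ so that $f-\phi_j(D)f$ lies in that space and the infimum is taken over an admissible competitor (here one uses $f\in B^1_{\infty,\infty}\subset B^0_{\infty,1}$ via the summable geometric series, and that $\phi_j(D)$ maps $B^0_{\infty,1}$ to itself boundedly since $\cF^{-1}\phi_0\in L^1$).
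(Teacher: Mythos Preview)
Your proof is correct and is essentially the same approach as the paper's, which simply refers to the standard argument in Triebel's book (Section 2.4.2) for the embedding $B^1_{\infty,\infty}\hookrightarrow (B^s_{\infty,\infty},B^0_{\infty,1})_{\theta,\infty}$ and observes that for $t>1$ the competitor used there is precisely $\phi_j(D)f\in\cA_f$. You have written out that argument explicitly: the choice $f_0=\phi_j(D)f$, the two one-sided bounds $\|\phi_j(D)f\|_{B^s_{\infty,\infty}}\lesssim 2^{j(s-1)}\|f\|_{B^1_{\infty,\infty}}$ and $\|f-\phi_j(D)f\|_{B^0_{\infty,1}}\lesssim 2^{-j}\|f\|_{B^1_{\infty,\infty}}$, and the optimization $2^j\approx t^{1/s}$ are exactly the content of the referenced proof, so there is no substantive difference.
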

\begin{proof}
The functional $\tilde{K}(t,f,B^s_{\infty,\infty},B^0_{\infty,1})$ is just a variant of the $K$-functional in real interpolation theory (cf.\ \cite[Section 2.4.1]{triebel}), defined for $t>0$ and $f\in B^0_{\infty,1}(\rd)$ by 
\[
K(t,f,B^s_{\infty,\infty},B^0_{\infty,1})\coloneqq\inf \{\|f_0\|_{B^s_{\infty,\infty}}+t\|f-f_0\|_{B^0_{\infty,1}}: \  f_0\in B^s_{\infty,\infty}\}.
\]
It is well known that 
\[
\sup_{t>0} t^{-\theta}K(t,f,B^s_{\infty,\infty},B^0_{\infty,1})\lesssim \|f\|_{B^1_{\infty,\infty}}
\]
for every $f\in B^1_{\infty,\infty}(\rd)$, which amounts to the embedding $B^1_{\infty,\infty}\hookrightarrow (B^s_{\infty,\infty},B^0_{\infty,1})_{\theta,\infty}$. A proof of this fact can be found in \cite[Section 2.4.2]{triebel}, and an accurate inspection of the latter (the part dealing with $t>1$, to be precise) allows one to realize that \eqref{eq inter k} holds indeed. 
\end{proof}

We are now ready to prove Theorem \ref{thm mainthm 3}. We will consider vector fields $\tau$ in the Besov space $B^s_{p,q}(\rd;\rd)$ (i.e., the components belong to $B^s_{p,q}(\rd)$), endowed with the norm
\[
\|\tau\|_{B^s_{p,q}}\coloneqq \sum_{k=1}^d \|\tau^{(k)}\|_{B^s_{p,q}}, \qquad \tau=(\tau^{(1)},\ldots, \tau^{(d)}).
\]

\begin{proof}[Proof of Theorem \ref{thm mainthm 3}]
We can suppose $J=0$ by virtue of a scaling argument (cf.\ \eqref{eq scal}). Indeed, the estimate \eqref{eq ideal esp} is invariant under the substitutions $J \to J-n$ ($n\in\bZ$), $f(x)\to 2^{n/2}f(2^n x)$, $\tau(x)\to 2^{-n}\tau(2^n x)$ and $R\to 2^n R$. We can also suppose that $\|D\tau\|_{L^\infty}\leq\varepsilon_0$, with $\varepsilon_0$ small enough (to be fixed later on), because for $\varepsilon_0<\|D\tau\|_{L^\infty}\leq 1/2$ the estimate \eqref{eq ideal esp} with $J=0$ holds due to the fact that $S_0[\cP_0]$ is nonexpansive on $L^2(\rd)$ and $\|L_\tau\|\leq 2^{d/2}$.

We already know from Remark \ref{rem J1} (with $J=0$) that, for every $s\in (1,2)$ and $\tau\in C^s(\rd;\rd)=B^s_{\infty,\infty}(\rd;\rd)$ with $\|D\tau\|_{L^\infty}\leq 1/2$, 
\[
\| S_0[\cP_0](L_\tau f) - S_0[\cP_0](f) \|\lesssim \|\tau\|_{B^s_{\infty,\infty}}\|U_0[\cP_0]f\|_1.
\]

The assumption \eqref{eq emb int} and the fact that $\widehat{f}$ is supported in the ball $|\omega|\leq R$ imply that
\begin{equation}\label{eq int 1}
   \| S_0[\cP_0](L_\tau f) - S_0[\cP_0](f) \|\lesssim \log^\beta(e+R)\|\tau\|_{B^s_{\infty,\infty}}\|f\|_{L^2}. 
\end{equation}
On the other hand, if $\tau_0,\tau_1\in {\rm Lip}(\rd;\rd)$ satisfy $\|D\tau_0\|_{L^\infty}\leq 1/2$ and $\|D\tau_1\|_{L^\infty}\leq 1/2$, a Taylor expansion yields
\begin{align*}
\|L_{\tau_0}f-L_{\tau_1}f\|_{L^2}&\leq \|\tau_0-\tau_1\|_{L^\infty} \int_0^1\|L_{(1-t)\tau_1+t\tau_0}(\nabla f)\|_{L^2}\, dt\\
&\lesssim\|\tau_0-\tau_1\|_{L^\infty}\|\nabla f\|_{L^2}
\end{align*}
where we used that $\|D((1-t)\tau_1+t\tau_0)\|_{L^\infty}\leq 1/2$. Since $\|\nabla f\|_{L^2}\lesssim (1+R)\|f\|_{L^2}$ and $S_0[\cP_0]$ is nonexpansive, we conclude that 
\begin{align}\label{eq int 2}
\| S_0[\cP_0](L_{\tau_0} f) - S_0[\cP_0](L_{\tau_1}f) \|
&\lesssim (1+R)\|\tau_0-\tau_1\|_{L^\infty}\|f\|_{L^2}\\
&\lesssim (1+R)\|\tau_0-\tau_1\|_{B^0_{\infty,1}}\|f\|_{L^2},
     \nonumber
\end{align}
where we used the embedding $B^0_{\infty,1}(\rd;\rd)\hookrightarrow L^\infty(\rd;\rd)$ -- see for instance \cite[Proposition 2.1]{sawano}.

We now resort to a nonlinear interpolation argument between \eqref{eq int 1} and \eqref{eq int 2}. Set $\cH=\ell^2(\cP_0,L^2(\rd))$ and, for fixed $f$ as above and $\tau\in {\rm Lip}(\rd;\rd)$ with $\|D\tau\|_{L^\infty}\leq 1/2$, consider 
\[
T_f(\tau)\coloneqq  S_0[\cP_0](L_\tau f) - S_0[\cP_0](f).
\]
Let $s\in(1,2)$ and $\theta\in (0,1)$ be such that $1=(1-\theta)s$ and consider $\tau\in {\rm Lip}(\rd;\rd)$ with $\|D\tau\|_{L^\infty}\leq \varepsilon_0$. For any  $\tau_0\in B^s_{\infty,
\infty}(\rd;\rd)$ with $\|D\tau_0\|_{L^\infty}\leq 1/2$ we have, by the triangle inequality,
 \eqref{eq int 1} and \eqref{eq int 2}, for $t\geq 1$,
\begin{align*}
\|T_f(\tau)\|_\cH&\leq
 \|T_f(\tau_0)\|_{\cH}+t\|T_f(\tau)-T_f(\tau_0)\|_{\cH}\\
&\lesssim \log^\beta(e+R)\|\tau_0\|_{B^s_{\infty,\infty}}\|f\|_{L^2}+t(1+R)\| \tau-\tau_0\|_{B^0_{\infty,1}}\|f\|_{L^2}\\
&= \sum_{k=1}^d\big( \log^\beta(e+R)\|\tau_0^{(k)}\|_{B^s_{\infty,\infty}}+t(1+R)\| \tau^{(k)}-\tau_0^{(k)}\|_{B^0_{\infty,1}}\big)\|f\|_{L^2}
\end{align*}
 where we expanded the Besov norms in terms of the components, that is $\tau=(\tau^{(1)}, \ldots, \tau^{(d)})$ and $\tau_0=(\tau^{(1)}_0, \ldots, \tau^{(d)}_0)$.
 
Consider now $\chi\in C^\infty(\rd)$, supported where $|\omega|\leq 1$, with $\chi(\omega)=1$ for $|\omega|\leq 1/2$, along with the corresponding Fourier multiplier $\chi(D)$. We write $\tau_0^{(k)}=\chi(D)\tau_0^{(k)}+(1-\chi(D))\tau_0^{(k)}$. By the triangle inequality we obtain
 \begin{align}\label{eq inter20}
 \|T_f(\tau)\|_\cH
&\lesssim \log^\beta(e+R) \sum_{k=1}^d \|\chi(D)\tau_0^{(k)}\|_{B^s_{\infty,\infty}}\|f\|_{L^2}\\
&+ \sum_{k=1}^d\big( \log^\beta(e+R)\|(1-\chi(D))\tau_0^{(k)}\|_{B^s_{\infty,\infty}}+t(1+R)\| \tau^{(k)}-\tau_0^{(k)}\|_{B^0_{\infty,1}}\big)\|f\|_{L^2}.\nonumber
\end{align}
Since $\|D\tau\|_{L^\infty}\leq \varepsilon_0$, if $\varepsilon_0$ is small enough and $\tau_0^{(k)}\in\cA_{\tau^{(k)}}$ (cf.\ \eqref{eq def A}) we have $\|D\tau_0^{(k)}\|_{L^\infty}\leq C\|D\tau^{(k)}\|_{L^\infty} \leq  1/(2\sqrt{d})$, implying in particular that $\|D\tau_0\|_{L^\infty}\leq 1/2$. Moreover 
\[
\|\chi(D)\tau_0^{(k)}\|_{B^s_{\infty,\infty}}\lesssim \|\tau_0^{(k)}\|_{L^\infty}\lesssim \|\tau^{(k)}\|_{L^\infty}.
\]
We also remark that, since the Fourier transform of $\tau^{(k)}-\tau_0^{(k)}$  is supported in the region where $|\omega|\geq 1$ (with reference to \eqref{eq def A}, we have indeed that $\phi_j(\omega)=1$ in the ball $|\omega|\leq 1$), 
\[
\tau^{(k)}-\tau_0^{(k)}=(1-\chi(D))\tau^{(k)}- (1-\chi(D))\tau_0^{(k)}.
\]

In light of the facts highlighted so far, now we take the infimum of \eqref{eq inter20} over $\tau_0^{(k)}\in\cA_{\tau^{(k)}}$. For $t\geq 1$ we obtain
\begin{align*}
\|T_f(\tau)\|_\cH&\lesssim
\log^\beta(e+R)  \|\tau\|_{L^\infty}\|f\|_{L^2}\\
&+\log^\beta(e+R) \sum_{k=1}^d \tilde{K}(t(1+R)/\log^\beta(e+R),(1-\chi(D))\tau^{(k)},B^{s}_{\infty,\infty},B^{0}_{\infty,1})\|f\|_{L^2}
\end{align*}
where the functional $\tilde{K}$ is defined in Lemma \ref{lem inter} and we used the fact that
\[
\cA_{(1-\chi(D))\tau^{(k)}}=\{(1-\chi(D))f_0:\ f_0\in\cA_{\tau^{(k)}}\}.
\]

Finally, by multiplying the latter estimate by $t^{-\theta}$ and then by taking the supremum for $t>1$, we obtain 
\begin{align*}
\|T_f(\tau)\|_{\cH}&\lesssim
\big(\log^\beta(e+R)  \|\tau\|_{L^\infty}+
\log^{\beta(1-\theta)}(e+R)(1+R)^\theta \sum_{k=1}^d \|(1-\chi(D))\tau^{(k)}\|_{B^{1}_{\infty,\infty}}\big)\|f\|_{L^2}\\
&= \big(\log^\beta(e+R)  \|\tau\|_{L^\infty}+\log^{\beta(1-\theta)}(e+R)(1+R)^\theta \|(1-\chi(D))\tau\|_{B^1_{\infty,\infty}}\big)\|f\|_{L^2},
\end{align*}
where in the first inequality we applied Lemma \ref{lem inter}\footnote{Precisely, since $(1+R)/\log^\beta(e+R)\geq c_0$ for some $c_0>0$ and \[
\tilde{K}(t(1+R)/\log^\beta(e+R),f,B^s_{\infty,\infty},B^0_{\infty,1})\leq \tilde{K}(t\max\{c_0^{-1},1\}(1+R)/\log^\beta(e+R),f,B^s_{\infty,\infty},B^0_{\infty,1}),
\]
we can resort to \eqref{eq inter k} with $t$ replaced by $t\max\{c_0^{-1},1\}(1+R)/\log^\beta(e+R)>1$.}.
 On the other hand, we have
\[
\|(1-\chi(D))\tau\|_{B^1_{\infty,\infty}}\lesssim \|D\tau\|_{B^0_{\infty,\infty}}\lesssim \|D\tau\|_{L^\infty}.\]
The second inequality is clear from the definition of the $B^0_{\infty,\infty}$ norm. 
The first inequality follows by a standard argument that we sketch here for the benefit of the reader. Let $\tilde{\phi}'$ be a smooth function in $\rd$, supported in the annulus $2^{-2}\leq|\omega|\leq 2^4$, with $\tilde{\phi}'(\omega)=1$ for $2^{-1}\leq|\omega|\leq 2$. Then we write $\tilde{\phi}_j(\omega)=\tilde{\phi}'(2^{-j}\omega)\tilde{\phi}_j(\omega)$ in the definition of the Besov norm, and we observe that the functions $2^j \tilde{\phi}'(2^{-j}\omega)\omega_k/|\omega|^2$, $k=1,\ldots,d$, can be written as $\phi''_k(2^{-j}\omega)$ for some $\phi''_k$ smooth with compact support. Hence their inverse Fourier transforms have $L^1$ norm uniformly bounded with respect to $j$, and the corresponding Fourier multipliers are thus uniformly bounded on $L^\infty(\rd)$. Similarly, if $\phi_0'$ is a smooth function supported in the ball $|\omega|\leq 4$, with $\phi_0'(\omega)=1$ for $|\omega|\leq 2$ we can write $\phi_0=\phi_0'\phi_0$ and observe that $(1-\chi(\omega))\phi'_0(\omega)\omega_k/|\omega|^2$ is a smooth function with compact support, and hence defines a Fourier multiplier bound on $L^\infty(\rd)$. 

The desired estimate \eqref{eq ideal esp} is then proved since $s$ can be chosen arbitrarily close to $1$, hence making in turn the exponent $\theta$ arbitrarily small. 
\end{proof}


\section*{Acknowledgements}

The authors wish to express their gratitude to Giovanni S.\ Alberti and Matteo Santacesaria for fruitful conversations on the topics of the manuscript. They also gratefully acknowledge insightful suggestions from the referee of a previously submitted version of this note.  


S. Ivan Trapasso is member of the Machine Learning Genoa (MaLGa) Center, Università di Genova. This material is based upon work supported by the Air Force Office of Scientific Research under award number FA8655-20-1-7027, as well as by Fondazione Compagnia di San Paolo.

The authors are members of the Gruppo Nazionale per l'Analisi Matematica, la Probabilità e le loro Applicazioni (GNAMPA) of the Istituto Nazionale di Alta Matematica (INdAM).


\begin{thebibliography}{99}
	\bibitem{alaifari} Rima Alaifari, Giovanni S. Alberti and Tandri Gauksson. ADef: an iterative algorithm to construct adversarial deformations. In: \textit{7th International Conference on Learning Representations, ICLR 2019}. Open access: arXiv:1804.07729. 
	
	\bibitem{balan} Radu Balan, Maneesh Singh and Dongmian Zou. Lipschitz properties for deep convolutional networks. \textit{Contemporary Mathematics} \textbf{706} (2018), 129--151.
	
	\bibitem{belkin} Mikhail Belkin, Daniel Hsu, Siyuan Ma and Soumik Mandal. Reconciling modern machine-learning practice and the classical bias-variance trade-off. \textit{Proc. Natl. Acad. Sci. USA} \textbf{116} (2019), no.\ 32, 15849--15854. 
	

	\bibitem{bietti} Alberto Bietti and Julien Mairal. Group invariance, stability to deformations, and complexity of deep
convolutional representations. \textit{Journal of Machine Learning Research (JMLR)} \textbf{20}(25) (2019):1--49.

\bibitem{brunamallat} Joan Bruna and St\'ephane Mallat. Invariant scattering convolution networks. \textit{IEEE Transactions on
pattern analysis and machine intelligence (PAMI)}, \textbf{35}(8) (2013), 1872-–1886.

	\bibitem{candesfio} Emmanuel J. Candès and Laurent Demanet. The curvelet representation of wave propagators is optimally sparse. \textit{Comm. Pure Appl. Math.} \textbf{58} (2005), no. 11, 1472--1528. 
	
	\bibitem{candes} Emmanuel J. Candès and David L. Donoho. New tight frames of curvelets and optimal representations of objects with piecewise $C^2$ singularities. \textit{Comm. Pure Appl. Math.} \textbf{57} (2004), no. 2, 219--266.
	
	\bibitem{carliniwag} Nicholas Carlini and David Wagner. Towards evaluating the robustness of neural networks. In: \textit{2017 IEEE Symposium on Security and Privacy (SP), San Jose, CA, USA}, 2017, 39--57. 
	
	
	\bibitem{czaja} Wojciech Czaja and Weilin Li. Analysis of time-frequency scattering transforms. \textit{Appl. Comput. Harmon. Anal.} \textbf{47} (2019), no. 1, 149--171. 
	

	
	
	
	
	

	
	\bibitem{goodfe} Ian Goodfellow, Jonathon Shlens and Christian Szegedy. Explaining and harnessing adversarial examples. In: \textit{3rd International Conference on Learning Representations, ICLR 2015}. Open access: arXiv:1412.6572.  
	
	
	
	
	
	\bibitem{guo} Kanghui Guo, Gitta Kutyniok and Demetrio Labate. Sparse multidimensional representations using anisotropic dilation and shear operators. In: \textit{Wavelets and splines: Athens 2005}, pp. 189--201, Mod. Methods Math., Nashboro Press, Brentwood, TN, 2006. 
	
	\bibitem{hastie} Trevor Hastie, Andrea Montanari, Saharon Rosset and Ryan J. Tibshirani. Surprises in high-dimensional ridgeless least squares interpolation. \textit{Ann. Statist.} \textbf{50} (2022), no. 2, 949--986. 
	
	

	\bibitem{koller} Michael Koller, Johannes Gro\ss mann, Ullrich Monich and Holger Boche. Deformation stability of deep convolutional neural networks on Sobolev spaces. In: \textit{2018 IEEE International Conference on Acoustics, Speech and Signal Processing (ICASSP)}, Calgary, AB, 2018, 6872--6876.
	
	\bibitem{kurakin} Alexey Kurakin, Ian Goodfellow and Samy Bengio. Adversarial examples in the physical world. In: \textit{Artificial Intelligence - Safety and Security}, Ed.\ by  R.\ V.\ Yampolskiy, Chapman and Hall/CRC, 2018.
	
	\bibitem{lecun} Yann LeCun, Yoshua Bengio and Geoffrey Hinton.  Deep learning. \textit{Nature} \textbf{521}, (2015), 436--444.
	
	
	\bibitem{lions} Jacques-Louis Lions. Some remarks on variational inequalities. In: \textit{Proc. Internat. Conf. on Functional Analysis and Related Topics} (Tokyo, 1969), pp. 269--282. Univ. Tokyo Press, Tokyo, 1970.
	
	\bibitem{lunardi} Alessandra Lunardi. \textit{Interpolation Theory}. Third edition. Edizioni della Normale, Pisa, 2018. 
		
	
	\bibitem{maligranda} Lech Maligranda. On interpolation of nonlinear operators. \textit{Comment. Math. Prace Mat.} \textbf{28} (1989), no. 2, 253--275. 
	
	\bibitem{mall cpam} St\'ephane Mallat. Group invariant scattering. \textit{Comm. Pure Appl. Math.} \textbf{65} (2012), no. 10, 1331--1398. 
	
	\bibitem{mallbook} St\'ephane Mallat. \textit{A Wavelet Tour of Signal Processing. The Sparse Way.} Third edition. With contributions from Gabriel Peyr\'e. Elsevier/Academic Press, Amsterdam, 2009.
	
	\bibitem{mei} Song Mei and Andrea Montanari. The generalization error of random features regression: precise asymptotics and the double descent curve. \textit{Comm. Pure Appl. Math.} \textbf{75} (2022), no. 4, 667--766. 
	\bibitem{meyer} Yves Meyer. \textit{Wavelets and Operators.} Cambridge University Press, Cambridge, 1992. 
	
	\bibitem{nakkiran} Preetum Nakkiran, Gal Kaplun, Yamini Bansal, Tristan Yang, Boaz Barak and Ilya Sutskever. Deep double descent: where bigger models and more data hurt. \textit{J. Stat. Mech. Theory Exp.} 2021, no. 12, Paper No. 124003, 32 pp. 
	
	\bibitem{peetre} Jaak Peetre. Interpolation of Lipschitz operators and metric spaces. \textit{Mathematica (Cluj)} \textbf{12}(35) (1970), 325--334. 
	
	\bibitem{sawano} Yoshihiro Sawano. \textit{Theory of Besov Spaces}. Springer, Singapore, 2018. 
	
\bibitem{scherzer} Otmar Scherzer, Markus Grasmair, Harald Grossauer, Markus Haltmeier, Frank Lenzen.  \textit{Variational Methods in Imaging.} Springer, New York, 2009

\bibitem{stein} Elias M. Stein. \textit{Harmonic Analysis: Real-variable Methods, Orthogonality, and Oscillatory Integrals.} Princeton University Press, Princeton, NJ, 1993. 
	
	\bibitem{sze}      Christian Szegedy, Wojciech Zaremba, Ilya Sutskever, Joan Bruna, Dumitru Erhan, Ian Goodfellow and Rob Fergus.  Intriguing properties of neural networks. In: \textit{2nd International Conference on Learning Representations, ICLR 2014}. Open access: arXiv:1312.6199.
	
	\bibitem{tartar} Luc Tartar. Interpolation non linéaire et régularité. \textit{J. Functional Analysis} \textbf{9} (1972), 469--489. 
	\bibitem{triebel} Hans Triebel. \textit{Theory of Function Spaces.} Reprint of 1983 edition. Birkh\"auser/Springer Basel AG, Basel, 2010. 
	
	\bibitem{trouve} Alain Trouvé. Diffeomorphisms groups and pattern matching in image analysis. \textit{International Journal of Computer Vision} \textbf{28} (1998), 213--221.
	
	\bibitem{Waldspurger} Irène Waldspurger. Exponential decay of scattering coefficients.  	arXiv:1605.07464. 

	
	\bibitem{wiat paper} Thomas Wiatowski and Helmut B\"olcskei. A mathematical theory of deep convolutional neural networks for feature extraction. \textit{IEEE Trans. Inform. Theory} \textbf{64} (2018), no. 3, 1845--1866.  
	
	\bibitem{wiat old} Thomas Wiatowski and Helmut B\"olcskei. Deep convolutional neural networks based on semi-discrete frames. In: \textit{2015 IEEE International Symposium on Information Theory (ISIT), Hong Kong, China}, 2015, 1212--1216.
	
	\bibitem{younes} Laurent Younes. \textit{Shapes and Diffeomorphisms}. Springer, Berlin, 2019. 
	
	\bibitem{zou} Dongmian Zou, Radu Balan and Maneesh Singh. On Lipschitz bounds of general convolutional neural networks. \textit{IEEE Trans. on Info. Theory} \textbf{66}(3) (2020), 1738--1759.
\end{thebibliography}
\end{document}